\pgfplotsset{compat=1.15}
\newtheorem{theorem}{Theorem}[section]
\newtheorem*{theorem*}{Theorem}
\newtheorem{lemma}[theorem]{Lemma}
\newtheorem{proposition}[theorem]{Proposition}
\theoremstyle{remark}
 \newtheorem{remark}[theorem]{Remark}}
\theoremstyle{definition}
 \newtheorem{definition}[theorem]{Definition}
 \newtheorem{example}[theorem]{Example}
\def\S{\mathcal{S}}
\newcommand{\CC}[0]{\ensuremath{\mathbb{C}}}
\newcommand{\ZZ}[0]{\ensuremath{\mathbb{Z}}}
\newcommand{\NN}[0]{\ensuremath{\mathbb{N}}}
\newcommand{\RR}[0]{\ensuremath{\mathbb{R}}}
\newcommand{\ra}{\rangle}
\newcommand{\la}{\langle}
\newcommand{\LND}[0]{\ensuremath{\operatorname{LND}}}
\newcommand{\spec}[0]{\ensuremath{\operatorname{Spec}}}
\newcommand{\cone}[0]{\ensuremath{\operatorname{cone}}}
\newcommand{\Der}[0]{\ensuremath{\operatorname{Der}}}
\newcommand{\li}{\varprojlim}
\newcommand{\bs}{\backslash}
\newcommand*\bigcdot{\mathpalette\bigcdot@{.5}} \newcommand*\bigcdot@[2]{\mathbin{\vcenter{\hbox{\scalebox{#2}{$\m@th#1\bullet$}}}}} \makeatother
\begin{document}

\title{Derivations on algebras of one-point compactification of affine semigroups}

\author{Roberto D\'iaz}
\address{Instituto de Matem\'atica y F\'\i sica, Universidad de Talca,
 Casilla 721, Talca, Chile.}
\email{robediaz@utalca.cl}


\thanks{{\it 2000 Mathematics Subject
 Classification}: 13J10; 13N15; 20M25; 22A15.\\
 \mbox{\hspace{11pt}}{\it Key words and phrases}: affine semigroup, locally nilpotent derivation, topologically integrable derivation, topological semigroup.\\
 \mbox{\hspace{11pt}}The author was partially supported by CONICYT-PFCHA/Doctorado Nacional/2016-folio 21161165.}

\begin{abstract}
For any affine semigroup $S$ the set $S\cup\{\infty\}$ has a natural structure of semigroup, additionally if $S$ is endowed with the discrete topology, the semigroup $S\cup\{\infty\}$ can be studied as the one-point compactification of $S$. In this article we study the derivations on the semigroup algebra $\CC[S\cup\{\infty\}]$ in relation to the derivations on the semigroup algebra $\CC[S]$ considering the metrizable topology on $\CC[S\cup\{\infty\}]$ induced by the one-point compactification topology of $S\cup\{\infty\}$.
\end{abstract}

\maketitle

\section*{Introduction}
The affine semigroups are related with the algebraic geometry since the duality with the category of affine toric variety \cite{CLS11}. This duality allow us to study geometry aspect of affine toric variety in term of semigroup properties, so toric varieties are an important testing ground for developing theories in algebraic geometry. An example of this interaction that motivates this research, is described as follow:

If $B$ is a $\CC$-algebra, a derivations on $B$ is an endomorphisms $\partial\colon B\rightarrow B$ such that $\partial(fg)=f\partial(g)+g\partial(f)$ for all $f$ and $g$ in $B$. A derivation $\partial$ on $B$ is said to be locally nilpotent derivation if for all $f\in B$ there exists $n:=n(f)\in \NN$ such that $\partial^{(n)}(f)=0$. This special type of derivation on $B$ is in correspondence with regular $(\CC,+)$-action on $\spec(B)$, an extensive study of the correspondence is developed in \cite{F06}. When $B=\CC[S]:=\bigoplus_{a\in M}C\cdot\chi^a$ is the $M$-graduated $\CC$-algebra associated to the affine semigroup $S$ with $M=\ZZ S$ also $C=\mathbb{C}$ when $a\in S$ and $C=\{0\}$ when $a\notin S$ (see Section \ref{graduated}), a homogeneous derivations on $\CC[S]$ is a derivation $\partial \colon \CC[S]\rightarrow \CC[S]$ such that there exists $e\in M$ verifying $\partial(C\cdot\chi^a)$ is sent to $C\cdot\chi^{a+e}$. The element $e\in M$ is the degree of $\partial$ and it is denoted by $e=\deg\partial$. In \cite[Lemma 1.10]{L10}
the author provides a proof to describe all derivation $\partial$ on $\CC[S]$ as a finite sum $\sum_{e\in M}\partial_{e}$ of homogeneous derivation $\partial_e$ and a complete description of homogeneous locally nilpotent derivation on $\CC[S]$ when $S$ is in correspondence with normal affine toric variety \cite[Theorem 2.7]{L10}. Geometrically, homogeneous locally nilpotent derivation on $\CC[S]$ are in correspondence with regular $(\CC,+)$-action on $\spec(\CC[S])$ normalized by the torus action.

Recently locally nilpotent derivation was generalized to the context of affine ind-scheme, see \cite{Kam96, Kam03} for an ample study of affine ind-scheme and \cite{DDL21} for the generalization of locally nilpotent derivation. In our context, if $B$ is a separated topological $\CC$-algebra with a fundamental system $\{\mathfrak{a}_i\}_{i\in \NN}$ of neighborhood of 0 consisting of ideals $\mathfrak{a}_i$, a $\CC$-continuous derivation on $B$ is said to be topologically integrable if the sequence of $\mathbf{\CC}$-linear endomorphisms $(\partial^{(n)})_{i\in \mathbb{N}}$ of $B$ converges continuously to the zero homomorphism, that is, if for every $f\in B$ and every $i\in\mathbb{N}$, there exist indices $n_0,j\in \mathbb{N}$ such that
$\partial^{(n)}(f+\mathfrak{a}_j)\subset \mathfrak{a}_i$ for every integer $n\geq n_0$, see Definition \ref{topologically integrable} or Lemma \ref{equivalence} for an equivalent definition in our context.

With an affine semigroup $S$ we can define naturally a new semigroup $S_\infty:=S\cup\{\infty\}$ and the respective semigroup algebra $\CC[S_\infty]$, if additionally $S$ is endowed with the discrete topology we can see $S_\infty$ as the one-point compactification of $S$ but, unlike $S$, the semigroup $S_\infty$ in general it is not a topological semigroup, that is, the associative binary operation $S_\infty\times S_\infty\rightarrow S_\infty$ in general it is not continuous. Theorem \ref{theorem topological semigroup}, shows that $S_\infty$ is a topological semigroup if and only if $\{0\}$ is a face of $S$, affine semigroups $S$ where $\{0\}$ is a face will be called pointed affine semigroups.

In this article we are interesting in to study the derivations on $\CC[S_\infty]$ in relation with the derivations on $\CC[S]$. With out topology, in Theorem \ref{correspondence} we prove the correspondence between derivation of $\CC[S]$ with $\CC[S_\infty]$ and as principal result, considering the topology on $\CC[S_\infty]$ induced by the one-point compactification topology of $S_\infty$, we make a complete classification of topologically integrable derivation on $\CC[S_\infty]$ in correspondence with homogeneous derivation on $\CC[S]$ developed in Section \ref{Section:4} and concluded in Theorem \ref{clasification}.\\

The article is organized as follow. In Section \ref{Section:1} we make a summary of necessary definitions and results used in this article. Section \ref{Section:2} is devoted to study the topology structure of one-point compactificated affine semigroup and the respective $\CC$-algebra. In Section \ref{Section:3} we prove the correspondence between derivation on $\CC[S]$ and $\CC[S\cup \{\infty\}]$ and finally, in Section \ref{Section:4}, applying the correspondence, we make a complete classification of topologically integrable derivation on $\CC[S_\infty]$ in relation to homogeneous derivation on $\CC[S]$.

\section{Preliminaries}\label{Section:1}

\subsection{Semigroup and topological semigroup} By a \emph{semigroup} we mean to a nonempty set $S$ with an associative binary operation $+\colon S\times S\rightarrow S$ where $+(a,b)$ will be denoted by $a+b$. An element $0$ in a semigroup is an \emph{identity} if for every element $a$ in $S$, the equations $0+a=a$ and $a+0=a$ hold. A semigroup is said to be \emph{commutative} if $a+b=b+a$ for all $a,b \in S$. Typical example of semigroup is the natural number $\mathbb{N}$, for us, the element $0$ will be in $\mathbb{N}$. A \emph{face} $F$ of a semigroup $S$ is a subsemigroup such that if $a+b$ is in $F$ then $a$ and $b$ are elements of $F$. Morphisms in the category of semigroup are functions preserving the semigroup structure, this morphism will be called semigroup homomorphisms. 
\emph{cancellative} semigroup are semigroup such that if $a+b=a+c$ then $b=c$. In what follows all semigroups are cancellative and commutative since commutative semigroups $S$ are embeddable in a group $M$ if and only if $S$ is cancellative (see \cite[theorem 3.10]{A13}). 

A \emph{Semitopological semigroup} is a semigroup $S$ endowed with a topology such that for all $a\in S$ the function $\lambda_a\colon S\rightarrow S$ defined by $b\mapsto a+b$ is continuous. More strong, a \emph{topological semigroup} is a semigroup $S$ endowed with a topology such that associative binary operation $+\colon S\times S\rightarrow S$ is continuous, where $S\times S$ is endowed with the product topology. All topological semigroup are semitopological semigroup but the converse in general is false has we can see in the example proposed in \cite[Exercise 2.1.2]{H11} or Example \ref{not topological} in this article. Morphisms in the category of semitopological semigroup and topological semigroup are continuous semigroup homomorphisms.

\subsubsection{Affine semigroup and convex geometry}

\emph{Affine semigroups} are commutative semigroups $S$ with an identity and a finite set $A\subset S$ such that $S=\{\sum_{a\in A}\alpha_aa\mid \alpha_a\in \mathbb{N}\}$ verifying that $S$ is embeddable in a lattice $M\simeq \ZZ^{n}$ for some $n\geq 0$ integer, particularly we will assume $M\simeq \ZZ S=\{a-b\mid a,b \in S\}$ where $a-b$ represent an equivalence class where the equivalence relation $\sim$ is defined on $S\times S$ as $(a,b)\sim (c,d)$ if and only if $a+d=c+b$. We identify $S$ in $M$ via $a\mapsto a-0$. Together to $M$ we associate to $S$ the lattice $\ZZ^n\simeq N=\operatorname{Hom}(M,\ZZ)$ and a bilinear function $\la\ ,\ \ra\colon M\times N\rightarrow \ZZ$ naturally expandable to bilinear function $\la\ ,\ \ra\colon M_\RR\times N_\RR\rightarrow \ZZ$ where $M_\RR$ and $N_\RR$ are the vectorial spaces $M\otimes \RR\simeq \RR^n$ and $N\otimes \RR\simeq \RR^n$ respectively. For any affine semigroup $S$ with finite generator set $A=\{a_1,\dots, a_n\}$ we define the convex polyhedral cone $\cone(A)=\{\lambda_1a_1+\dots +\lambda_na_n\in M_\RR\mid \lambda_i\geq 0\}$ and $\cone(A)^\vee=\{u\in N_\RR\mid \la m,u\ra\geq 0\ \text{for all}\ m\in \cone(A)\}$ this also is a convex polyhedral cone verifying $(\cone(A)^\vee)^\vee=\cone(A)$ (\cite[Proposition 1.2.4]{CLS11}). A face of a convex polyhedral cone $\sigma$ is a subset $\tau_m=\{u\in \sigma\mid \la m ,u\ra=0\}$ where $m\in \sigma^\vee$, convex polyhedral cones where $\{0\}$ is a face are called \emph{strongly convex} or pointed.
The dimension of a face $\tau$ is the dimension of the vectorial space generated by $\tau$. The set of faces of dimension $1$ of a convex polyhedral cone $\sigma$ will be denote as $\sigma(1)$ and the shortest vector in $N$ (or $M$) generator of a face $\rho \in \sigma(1)$ will be denoted indistinctly by $\rho$. Faces of polyhedral cones $\sigma$ are in correspondence with faces of the dual polyhedral cone $\sigma^\vee$ via $\tau\mapsto \tau^*:=\tau^{\perp}\cap \sigma^\vee$ where $\tau$ is a face of $\sigma$ and $\tau^{\perp}:=\{m\in M_\mathbb{R}\mid \la m,u\ra=0\text{ for all } u\in \tau.\}$ 
An affine semigroup is said to be $\emph{saturated}$ if $k a\in S$ when $k$ is a positive integer and $a\in M$, then $a\in S$. When $S$ is not saturated, we can associate to $S$ the affine semigroup $S^{sat}=\{a\in M\mid k a\in S,\text{ where $k$ is a positive integer}\}$ called the saturation of $S$. Additionally $S^{sat}=M\cap \operatorname{cone}(A)$ where $A$ is a finite generator set of $S$. 

\subsection{$\CC$-algebra of semigroup}\label{graduated}

 Let $S$ be a semigroup, a $S$-graded $\CC$-algebra is an associative, commutative $\CC$-algebra with unity and a direct sum decomposition $B=\bigoplus_{a\in S}B_a$ where $B_a\subset B$ are $\CC$-subvectorial spaces and $B_{a}B_{b}\subset B_{a+b}$. The $B_a$ are $S$-homogeneous component and we will say that the elements $f$ in $B_a$ for some $a\in S$ are $S$-homogeneous element of degree $a$. So for all semigroup $S$ we can associate it a $S$-graded $\CC$-algebra defined by $\CC[S]=\bigoplus_{a\in S}\CC\cdot\chi^a$ where $\chi^{a}$ are formal elements, verifying $\chi^a\chi^b=\chi^{a+b}$ and the identity is the element $1=\chi^0$. Morphisms between $\CC$-algebra of semigroup are homomorphisms of $\CC$-algebras induced by semigroup homomorphisms i.e. $\phi\colon \CC[S]\rightarrow \CC[S']$ is a morphism if there exists a semigroup homomorphism $\psi\colon S\rightarrow S'$ such that $\phi(\chi^a)=\chi^{\psi(a)}$. If $M$ is a group such that $S$ is embeddable in $M$ the $\CC$-algebra $\CC[S]$ also is $M$-graded via the identification $\CC[S]=\bigoplus_{a\in M}B_a$ where $B_a=\CC\cdot\chi^a$ if $a\in S$ and $B_a=\{0\}$ for the other case.

\subsection{Derivations}\label{derivation}
Let $B$ be a $\CC$-algebra, a $\CC$-derivation on $B$ is a $\CC$-linear function $\partial\colon B\rightarrow B$ such that $\partial(f)=0$ for all $f\in \CC$ and $\partial(fg)=f\partial(g)+g\partial(f)$ for all $f,g\in B$ (\emph{Leibniz's rule}). A derivation on $B$ is said to be locally nilpotent derivation (lnd for short) if for all $f\in B$ there exists $n\in \NN$ depending of $f$ such that $\partial^{(n)}(f)=0$ where $\partial^{(n)}$ is the $n$-th composition of $\partial$. The set of derivations on $B$ will be denoted by $\Der(B)$ and by $\LND(B)$ we mean to the set of lnd in $\Der(B)$ additionally the set of $\CC$-derivations on $B$ will be denoted by $\Der_\CC(B)$ and by $\LND_\CC(B)$ we mean to the set of lnd in $\Der_\CC(B)$.

If $B=\bigoplus_{a\in S}B_a$ is a $S$-graded $\CC$-algebra for some semigroup $S$, a derivation $\partial\in \operatorname{Der}(B)$ is said to be \emph{homogeneous} if it sends $S$-homogeneous elements into $S$-homogeneous elements, particularly when $S$ is affine semigroup, $M\simeq \ZZ^n$ the lattice associated to $S$ and $\CC[S]=\bigoplus_{a\in M}B_a$ is the $M$-graded previously described, for every nontrivial homogeneous derivation $\partial$ there exists an element $e\in M$ such that $\partial(B_a)\subset B_{a+e}$ for all $a\in M$ as is proved in the follow lemma:

\begin{lemma}
Let $M$ be the lattice associated to an affine semigroup $S$. If $\partial\colon \bigoplus_{a\in M}B_a\rightarrow \bigoplus_{a\in M}B_a$ is a homogeneous derivation, then there exists $e\in M$ such that $\partial(B_a)\subset B_{a+e}$ for all $a\in M$.
\end{lemma}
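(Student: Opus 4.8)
The plan is to read off, from each monomial $\chi^{a}$ on which $\partial$ does not vanish, the shift in degree that $\partial$ induces on it, and then to use the Leibniz rule to force all these shifts to coincide. First I would dispose of the trivial case: if $\partial=0$, then $\partial(B_{a})=\{0\}\subset B_{a+e}$ for any $e\in M$, so we may assume $\partial\neq 0$ and put
\[
D=\{a\in S : \partial(\chi^{a})\neq 0\},
\]
which is nonempty because $\partial$ is $\CC$-linear and the $\chi^{a}$ with $a\in S$ span $B$. Since $\partial$ is homogeneous, for each $a\in D$ the element $\partial(\chi^{a})$ is a nonzero homogeneous element; let $d(a)\in M$ denote its degree, so $\partial(\chi^{a})=c_{a}\chi^{d(a)}$ with $c_{a}\neq 0$. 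As $B_{d(a)}\neq\{0\}$, the definition of the $M$-grading on $\CC[S]$ forces $d(a)\in S$. Set $e(a):=d(a)-a\in M$; the goal becomes to prove that $e(a)$ is constant on $D$.

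The key step is a computation with the Leibniz rule. Fix $a,b\in D$. From $\chi^{a+b}=\chi^{a}\chi^{b}$ we obtain
\[
\partial(\chi^{a+b})=\chi^{a}\partial(\chi^{b})+\chi^{b}\partial(\chi^{a})=c_{b}\,\chi^{a+d(b)}+c_{a}\,\chi^{b+d(a)}.
\]
Here $a,b,d(a),d(b)$ all lie in $S$, which is closed under addition, so $a+d(b)$ and $b+d(a)$ lie in $S$ and the monomials $\chi^{a+d(b)}$, $\chi^{b+d(a)}$ are nonzero; since $c_{a},c_{b}\neq 0$, the right-hand side is a sum of two nonzero homogeneous elements, of degrees $a+d(b)$ and $b+d(a)$. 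If these degrees were distinct, then, the monomials $\chi^{u}$, $\chi^{v}$ with $u\neq v$ being linearly independent in $\CC[S]$, the element $\partial(\chi^{a+b})$ would be a nonzero element with nonzero components in two different degrees, hence not homogeneous, contradicting that $\partial$ is homogeneous. Therefore $a+d(b)=b+d(a)$, i.e. $d(b)-b=d(a)-a$, that is $e(a)=e(b)$.

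Consequently $e:=e(a)$ is well defined, independent of $a\in D$, and it remains only to check that $\partial(B_{a})\subset B_{a+e}$ for every $a\in M$: for $a\in D$ this is $\partial(\chi^{a})=c_{a}\chi^{a+e}\in B_{a+e}$; for $a\in S\setminus D$ one has $\partial(B_{a})=\partial(\CC\cdot\chi^{a})=\{0\}\subset B_{a+e}$; and for $a\in M\setminus S$ one has $B_{a}=\{0\}$, so again $\partial(B_{a})=\{0\}\subset B_{a+e}$. I expect the only genuinely delicate point to be the verification that both summands of the Leibniz expansion are nonzero: this is exactly where one uses that $d(a),d(b)\in S$ together with the additive closure of $S$, ruling out an accidental vanishing of a summand (or cancellation) that would otherwise block the comparison of degrees. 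Everything else is routine bookkeeping with the $M$-grading.
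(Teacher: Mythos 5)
Your proof is correct and follows essentially the same route as the paper: apply the Leibniz rule to $\chi^{a}\chi^{b}$ for two degrees on which $\partial$ does not vanish and compare the degrees of the two summands to force the shifts $e(a)$ and $e(b)$ to coincide. You are somewhat more careful than the paper's own argument (handling $\partial=0$, the degrees outside $S$, and the non-vanishing of both summands via cancellativity of $S$), but the core idea is identical.
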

\begin{proof}
Let $a\neq b$ be an element in $M$ and let $f$, $g$ be elements in $B_{a}$ and $B_{b}$ respectively such that $\partial(f)\neq0\in B_{a+e(a)}$ and $\partial(g)\neq 0\in B_{b+e(b)}$, where $e(a)$ and $e(b)$ are elements in $M$ depending of $a$ and $b$ respectively. By Leibniz's rule $\partial(fg)=f\partial(g)+g\partial(f)\in B_{b+a+e(a)}\bigoplus B_{a+b+e(b)}$ but also $\partial(fg)\in B_{a+b+e(a+b)}$ where $e(a+b)$ is an element in $M$ depending of $a+b$, we conclude that 
$a+b+e(b)=b+a+e(a)$ thus
$e(a)=e(b)$.
\end{proof}
The element $e\in M$ associated to $\partial$ will be called the \emph{degree of} $\partial$ and will be denoted by $e=\deg \partial$. We will say homogeneous derivation instead of $S$-homogeneous derivation when the context is clear.

 \subsection{Topological structures}
 A \emph{topological ring} $R$ is a ring with topological structure such that the functions $R\times R\rightarrow R$ defined by $(f,g)\mapsto f-g$ and $(f,g)\mapsto fg$ are continuous, morphisms between topological rings are continuous homomorphisms between the rings. A \emph{Topological $R$-algebra} on a topological ring $R$ is a topological ring $B$ such that the function induced since the structure of $R$-algebra $R\times B \rightarrow B$ is continuous, also the morphism between topological $R$-algebras are continuous homomorphism between $R$-algebras.
 A topology on a topological ring $R$ (or $R$-algebra $B$) will be called linear if there exists a fundamental system of open neighborhood of its additive identity element $0$. In this article we will assume $R=\CC$ with discrete topology and $B$ a separated topological $\CC$-algebra with fundamental system of open neighborhood of its neutral element $0$ consisting in a countable family of ideals $\mathfrak{a}_0\supset\mathfrak{a}_1\supset \dots$.

\subsubsection{Topologically integrable derivation.}
Topologically integrable derivation was introduced by \cite{DDL21}
as a generalization of locally nilpotent derivation in the context of topological $R$-algebras and affine ind-schemes, theory develop for example in \cite{Kam96,Kam03}. This generalization is motivated since geometric aspect because, in the same form that locally nilpotent derivation, the topologically integrable derivations are in correspondence with $\mathbb{G}_{R,a}$-actions on the formal spectrum of a topological $R$-algebra $B$ (see \cite[Section 3]{DDL21}). In this article our interest is on topological $\CC$-algebras, there exists a more general definition, but for this article we work with the simplified definition:

\begin{definition}\label{topologically integrable}
Let $B$ be a topological $\CC$-algebra. We call a continuous $\CC$-derivation $\partial$ of $B$ \emph{topologically integrable} if the sequence of $\mathbf{k}$-linear endomorphisms
$(\partial^{(n)})_{i\in \mathbb{N}}$ of $B$ converges continuously to the zero homomorphism, that is, if for every $f\in B$ and every
$i\in\mathbb{N}$, there exists an indices $n_0,j\in \mathbb{N}$ such that
$\partial^{(n)}(f+\mathfrak{a}_j)\subset \mathfrak{a}_i$ for every integer $n\geq n_0$.
\end{definition}
When $\mathfrak{a}_i=\{0\}$ for all $i\in \NN$ the topology on $B$ is the discrete topology and the definition of topologically integrable derivation coincide with locally nilpotent derivation. Allow us a natural example to clarify the topologically integrable derivation.

\begin{example}
Let $\CC[[u]]$ the ring of formal power series endowed with the $u$-adic topology, the $u$-adic topology is induced by the fundamental system of open ideal $\CC[[u]]=\la 1\ra\supset \la u\ra\supset \la u^2\ra\supset\dots$ where $\la u^j\ra$ are the ideals generated by $u^j$ for all integer $j\geq 0$. We define the derivation $\partial=u^2\dfrac{d}{du}\colon \CC[[u]]\rightarrow \CC[[u]]$. The derivation $\partial$ is continuous since $\partial(\la u^j\ra)\subset \la u^{j+1}\ra$, also is topologically integrable since $\partial^{(n)}(f+\la u^j\ra)\subset \la u^{n+1} \ra$ thus for every $f\in \CC[[u]]$ and $i\in n$, with arbitrary $j$ and $n_0=i$ it is verified that $\partial^{(n)}(f+\la u^j\ra)\subset \la u^i \ra$ for all $n\geq n_0=i$. A similar argument verifies that $u^l\dfrac{d}{du}\colon \CC[[u]]\rightarrow \CC[[u]]$ is topologically integrable for all $l\geq 2$.
\end{example}

As a rewriting of \cite[Lemma 1.9]{DDL21} we can conclude that:
\begin{lemma}\label{equivalence}
A derivation $\partial\in \Der_{\CC}(B)$ is topologically integrable if and only if the follows are verified:
\begin{enumerate}
 \item[(P.1)] The sequence $(\partial^{(n)}{(f)})_{n\in \NN}$ converge to $0$ for all $f\in B$ and
 \item[(P.2)] For all $i\in \NN$ there exists $j\in \NN$ such that $\partial^{(n)}(\mathfrak{a}_j)\subset \mathfrak{a}_i$ for all $n\in \NN$.
\end{enumerate}
\end{lemma}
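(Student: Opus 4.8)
The statement to prove is Lemma~\ref{equivalence}: a derivation $\partial \in \Der_\CC(B)$ is topologically integrable if and only if (P.1) and (P.2) hold. Since the lemma is billed as ``a rewriting of \cite[Lemma 1.9]{DDL21},'' I expect the proof to be a short, direct unwinding of the quantifiers in Definition~\ref{topologically integrable}, taking advantage of the special structure of our setting: the neighborhood basis at $0$ consists of a \emph{nested countable} chain of ideals $\mathfrak{a}_0 \supset \mathfrak{a}_1 \supset \cdots$, and $\partial$ is continuous.

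\medskip

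\textbf{Plan.} First I would prove the forward implication (topologically integrable $\Rightarrow$ (P.1) and (P.2)). For (P.1): fix $f \in B$ and $i \in \NN$; topological integrability gives $n_0, j$ with $\partial^{(n)}(f + \mathfrak{a}_j) \subset \mathfrak{a}_i$ for all $n \geq n_0$; in particular, taking the representative $f$ itself, $\partial^{(n)}(f) \in \mathfrak{a}_i$ for all $n \geq n_0$. Since $i$ was arbitrary and the $\mathfrak{a}_i$ form a neighborhood basis of $0$, this says exactly that $\partial^{(n)}(f) \to 0$. For (P.2): fix $i \in \NN$. Apply the definition with $f = 0$ to get $n_0, j$ with $\partial^{(n)}(\mathfrak{a}_j) \subset \mathfrak{a}_i$ for all $n \geq n_0$ (using $0 + \mathfrak{a}_j = \mathfrak{a}_j$). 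This handles only $n \geq n_0$, so I still need it for the finitely many $n < n_0$. Here continuity of $\partial$ enters: each $\partial^{(n)}$ is continuous, so for each $n < n_0$ there is $j_n$ with $\partial^{(n)}(\mathfrak{a}_{j_n}) \subset \mathfrak{a}_i$; now take $j' = \max(j, j_0, \dots, j_{n_0-1})$, and since the chain is nested, $\mathfrak{a}_{j'} \subset \mathfrak{a}_j \cap \bigcap_n \mathfrak{a}_{j_n}$, giving $\partial^{(n)}(\mathfrak{a}_{j'}) \subset \mathfrak{a}_i$ for \emph{all} $n \in \NN$. That proves (P.2).

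\medskip

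\textbf{Converse.} Assume (P.1) and (P.2); I want to show $\partial$ is topologically integrable. Fix $f \in B$ and $i \in \NN$. By (P.2) choose $j \in \NN$ with $\partial^{(n)}(\mathfrak{a}_j) \subset \mathfrak{a}_i$ for all $n$. By (P.1), $\partial^{(n)}(f) \to 0$, so there is $n_0$ with $\partial^{(n)}(f) \in \mathfrak{a}_i$ for all $n \geq n_0$. Then for any $g \in \mathfrak{a}_j$ and any $n \geq n_0$, linearity of $\partial^{(n)}$ gives $\partial^{(n)}(f + g) = \partial^{(n)}(f) + \partial^{(n)}(g) \in \mathfrak{a}_i + \mathfrak{a}_i = \mathfrak{a}_i$ (an ideal is closed under addition). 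Hence $\partial^{(n)}(f + \mathfrak{a}_j) \subset \mathfrak{a}_i$ for all $n \geq n_0$, which is precisely the condition in Definition~\ref{topologically integrable}.

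\medskip

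\textbf{Main obstacle.} There is no deep obstacle; the only subtlety is the ``gap'' in (P.2) for small $n$ in the forward direction, which is exactly why the hypothesis that $\partial$ (hence each $\partial^{(n)}$) is continuous is needed, together with the nestedness of the ideal chain to combine finitely many neighborhoods into one. I would make sure to state explicitly where continuity and nestedness are used, since those are the two structural features of our simplified setting that make the quantifier-shuffling go through cleanly.
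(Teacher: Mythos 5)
Your proof is correct, and in fact the paper itself offers no proof of this lemma at all: it simply states it as ``a rewriting of \cite[Lemma 1.9]{DDL21}'', so your quantifier-unwinding supplies exactly the argument the paper leaves implicit. Both directions are sound, and you correctly identify the one genuine subtlety (covering the finitely many $n<n_0$ in (P.2) via continuity of each $\partial^{(n)}$ and nestedness of the chain). One small addition worth making explicit in the converse: Definition~\ref{topologically integrable} requires $\partial$ to be \emph{continuous}, and this must be recovered from (P.1)--(P.2); it follows immediately from (P.2) with $n=1$, which gives continuity of the linear map $\partial$ at $0$ and hence everywhere, but you should say so rather than only verifying the convergence condition.
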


\section{one-point compactification of affine semigroup}\label{Section:2}

Let $S$ be a semigroup we can define the semigroup $S_\infty$ associated to $S$ as the set $S\cup \{\infty\}$ with the associative binary operation induced by the binary operation of $S$ and $a+\infty=\infty+a=\infty$ for all $a\in S_{\infty}$. The construction give us a commutative diagram:

\[\begin{tikzcd}
S\times S \arrow{r}{\eta_+} \arrow[swap]{d}{} & S \arrow{d}{} \\
S_\infty\times S_\infty \arrow{r}{\zeta_+} & S_\infty
\end{tikzcd}
\]

where $\zeta_+|_{S\times S}=\eta_+$. The notation $\zeta_+$ and $\eta_+$ will be used to describe the binary operations instead of
$+$ when we need to differentiate between the binary operations on $S$ and $S_\infty$.

If $S$ is a Hausdorff and locally compact topological semigroup we will endow $S_\infty$ with the one-point compactification topology (Alexandroff extension), i.e. the subsets $U$ in $S_\infty$ are open if:
\begin{enumerate}
 \item $U\subset S$ and is an open in $S$,
 \item $U=S_{\infty}\bs C$ where $C$ is compact in $S$.
\end{enumerate}
Equivalently the subsets $F$ in $S_\infty$ are closed if:
\begin{enumerate}
 \item $F=F'\cup\{\infty\}$ where $F'$ is a closed set in $S$,
 \item $F$ is a compact set in $S$.
\end{enumerate}
additionally $S_\infty$ verify to be Hausdorff, compact. In this section always we will assume $S$ is an affine semigroup endow with the discrete topology and $S_\infty$ will be endowed with the one-point compactification topology, easily we can see that $S_\infty$ is a semitopological semigroup as is proved in Lemma \ref{semitopological} but in general it is not a topological semigroup as we can see in Example \ref{not topological}.

Although some proofs prove more general cases, but for our purpose, the above conditions are sufficient.

\begin{lemma}\label{semitopological}
The semigroup $S_\infty$ is a semitopological semigroup.
\end{lemma}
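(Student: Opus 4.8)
The plan is to verify directly that for every $a\in S_\infty$ the left-translation $\lambda_a\colon S_\infty\to S_\infty$, $b\mapsto a+b$, is continuous, by checking preimages of a convenient basis of open sets. Since $S$ carries the discrete topology, its compact subsets are exactly the finite ones, so the topology of $S_\infty$ has a basis consisting of (i) arbitrary subsets $U\subset S$, together with (ii) the cofinite sets $S_\infty\bs C$ with $C\subset S$ finite, these being precisely the open neighborhoods of $\infty$.

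First I would dispose of the trivial case $a=\infty$: here $\lambda_\infty$ is the constant map with value $\infty$, hence continuous. Next, fix $a\in S$. For an open set $U\subset S$ of type (i), one has $\lambda_a^{-1}(U)=\{b\in S\mid a+b\in U\}$, which is again a subset of $S$ (note that $\infty\notin\lambda_a^{-1}(U)$ since $a+\infty=\infty\notin U$), hence open in $S_\infty$.

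The only case carrying any content is type (ii): $U=S_\infty\bs C$ with $C\subset S$ finite. Here $\infty\in\lambda_a^{-1}(U)$ because $a+\infty=\infty\notin C$, and $\lambda_a^{-1}(U)=S_\infty\bs\lambda_a^{-1}(C)$, so it suffices to show that $\lambda_a^{-1}(C)$ is finite. This is where cancellativity of $S$ enters, which is the standing hypothesis on all semigroups in the paper: the translation $b\mapsto a+b$ on $S$ is injective, so each fiber $\lambda_a^{-1}(c)$ for $c\in C$ has at most one element, whence $|\lambda_a^{-1}(C)|\leq|C|<\infty$. Therefore $\lambda_a^{-1}(U)$ is cofinite and contains $\infty$, i.e.\ open of type (ii). This exhausts the basis, so every $\lambda_a$ is continuous and $S_\infty$ is a semitopological semigroup.

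I do not expect a real obstacle here; the one point to be careful about is the finiteness of $\lambda_a^{-1}(C)$, and the remark worth making is that cancellativity is genuinely used there. This is also the step whose analogue fails for the full multiplication map $\zeta_+\colon S_\infty\times S_\infty\to S_\infty$, which is consistent with the fact recorded later in the section that $S_\infty$ need not be a topological semigroup.
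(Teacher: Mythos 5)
Your proof is correct and follows essentially the same route as the paper: check each translation $\lambda_a$ separately, handle $\lambda_\infty$ as a trivial (constant) map, and use cancellativity to see that preimages of finite sets under $\lambda_a$, $a\in S$, are finite. The only cosmetic differences are that you argue with open sets rather than closed sets and that you make the role of cancellativity (injectivity of $b\mapsto a+b$) explicit, which is a slight improvement in clarity over the paper's version.
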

\begin{proof}
We need to study the options $a=0$, $a=\infty$ and other cases. 
\begin{enumerate}
 \item $a=0$. The function $\lambda_0\colon S_\infty\rightarrow S_\infty$ is the identity function so preimage of closed sets are closed sets.
 \item $a=\infty$. for all subset $A\subset S_\infty$ the preimage by $\lambda_\infty$ is $S_\infty$ if $\infty\in A$ or the empty set if $\infty\notin A$ thus also is continuous.
 \item Other cases. If $F$ is a compact in $S\subset S_{\infty}$ then is finite, additionally $S$ is cancellative so preimage of $F$ by $\lambda_a$ is finite thus compact in $S$ and closed in $S_\infty$. The other possibility is $F=F'\cup\{\infty\}$ for some closed $F'\subset S$, the preimage of $F$ contain $\infty$ and since $S$ is endowed with the discrete topology any sub set is closed thus the preimage of $F$ is closed in $S_\infty$.
\end{enumerate}

\end{proof}

Generally not true that for some affine semigroup $S$ endowed with the discrete topology the semigroup $S_\infty$ with the one-point compactification topology is a topological semigroup.

\begin{example}\label{not topological}
Let $S=\ZZ$ be the semigroup (group) of integer number. The binary function $+\colon S_\infty \times S_\infty\rightarrow S_\infty$ is not continuous in $(\infty,\infty)$ because for example if we fix the open $S_\infty\bs\{0\}$, neighborhood of $\infty+\infty=\infty$, any basis open $U=(U_1\cup\{\infty\})\times(U_1\cup\{\infty\})$ 
has an element $(a,-a)$ for some $a\in \ZZ$ then $+(U)\subsetneq S_\infty\bs\{0\}$.
\end{example}

Conversely, if $S=\NN$ the semigroup $S_{\infty}$ as we will prove more generally in Theorem \ref{theorem topological semigroup}, it is a topological semigroup.

Recall a \emph{Face} of a semigroup $S$ is a subsemigroup $F$ such that if $a+b\in F$ then $a\in F$ and $b\in F$.

\begin{definition}
By a pointed affine semigroup $S$ we mean to an affine semigroup $S$ where $\{0\}$ is a face of $S$, equivalently if $a+b=0$ then $a=b=0$.
\end{definition}

\begin{lemma}\label{finite1}
If $S\simeq \NN^l$ for some $l$ then for all $a\in S$ there exist finite pairs $(b,c)$ in $S\times S$ such that $a=b+c$.
\end{lemma}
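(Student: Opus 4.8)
The statement is elementary, and the plan is simply to transport the question through the semigroup isomorphism and then count coordinate by coordinate. Fix an isomorphism of semigroups $\phi\colon S\to \NN^l$. Since $\phi$ is a bijection with $\phi(x+y)=\phi(x)+\phi(y)$, a pair $(b,c)\in S\times S$ satisfies $a=b+c$ if and only if the pair $(\phi(b),\phi(c))$ satisfies $\phi(a)=\phi(b)+\phi(c)$ in $\NN^l$. Thus $\phi$ induces a bijection between the set of decompositions $a=b+c$ in $S$ and the set of decompositions $\phi(a)=\beta+\gamma$ with $\beta,\gamma\in \NN^l$, and it suffices to bound the number of the latter.

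Next I would write $\phi(a)=(a_1,\dots,a_l)$ with each $a_i\in\NN$, and for $\beta=(b_1,\dots,b_l)$, $\gamma=(c_1,\dots,c_l)$ observe that $\phi(a)=\beta+\gamma$ is equivalent to the system $b_i+c_i=a_i$ for $i=1,\dots,l$. For each fixed index $i$, the equation $b_i+c_i=a_i$ with $b_i,c_i\in\NN$ has exactly $a_i+1$ solutions, obtained by letting $b_i$ run over $\{0,1,\dots,a_i\}$ and setting $c_i:=a_i-b_i$. Since the coordinates are independent, the number of pairs $(\beta,\gamma)$ is $\prod_{i=1}^l (a_i+1)$, which is a finite number; hence so is the number of pairs $(b,c)$ in $S\times S$ with $a=b+c$.

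There is essentially no obstacle in this argument; the only point deserving a word of care is that the chosen identification $S\simeq\NN^l$ must be used as an isomorphism of semigroups, so that it carries additive decompositions in $S$ bijectively onto additive decompositions in $\NN^l$. Once this is noted, the coordinate-wise count above concludes the proof (and in fact gives the explicit bound $\prod_{i=1}^l(a_i+1)$, where $(a_1,\dots,a_l)$ is the image of $a$ in $\NN^l$).
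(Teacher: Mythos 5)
Your proof is correct and follows essentially the same route as the paper: both arguments reduce to $\NN^l$ and observe that each coordinate of a summand is bounded by the corresponding coordinate of $a$, giving finitely many decompositions. Your version merely makes the count explicit as $\prod_{i=1}^l(a_i+1)$, which is a harmless refinement.
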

\begin{proof}
We can see if $a=(a_1,\dots,a_l)=\alpha_1e_1+\dots +\alpha_l e_l$ where $e_i$ is the lattice vector with $0$ in its components except for the $i$-th component which has a $1$ and $\alpha_i$ in $\mathbb{N}$ for all integer $0\leq i\leq l$, then $\alpha_i\leq a_i$ so there exist finite combinations for obtain $a$ thus there exist finite pairs $(b,c)\in S\times S$ such that $a=b+c$.
\end{proof}

\begin{proposition}\label{finite}
$S$ is a pointed affine semigroup if and only if for all $a$ in $S$ there exist finite pairs $(b,c)$ such that $a=b+c$.
\end{proposition}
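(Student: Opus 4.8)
The plan is to prove both implications by relating pointedness of $S$ to strong convexity of the cone $\cone(A)$ of a finite generating set $A=\{a_1,\dots,a_n\}$ of $S$, and then to reduce the nontrivial implication to Lemma~\ref{finite1} via a lattice embedding into $\NN^N$.

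First I would dispose of the easy implication by contraposition. If $S$ is not pointed, then by the reformulation in the definition of pointed affine semigroup there are $b,c\in S$ with $b+c=0$ and $(b,c)\neq(0,0)$; setting $v:=b\neq 0$ we get $v\in S$ and $-v=c\in S$, hence $nv\in S$ and $-nv=n(-v)\in S$ for every integer $n\geq 1$. Since $M$ is torsion-free the elements $nv$ are pairwise distinct, so the identity $0=(nv)+(-nv)$ exhibits infinitely many pairs decomposing $0\in S$. Contrapositively, if every $a\in S$ has only finitely many such pairs, then $S$ is pointed.

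For the converse, assume $S$ is pointed; I may assume every $a_i\neq 0$ (discard zeros; the case $S=\{0\}$ is trivial). The geometric heart of the argument is the claim that $\cone(A)$ is then strongly convex: otherwise its lineality space $\cone(A)\cap(-\cone(A))$ is a nonzero rational linear subspace, hence contains a nonzero $v\in M$; then $v\in M\cap\cone(A)=S^{sat}$ and $-v\in M\cap\cone(A)=S^{sat}$, so $Kv\in S$ and $K'(-v)\in S$ for suitable positive integers $K,K'$, and passing to a common multiple yields a nonzero element of $S\cap(-S)$, contradicting pointedness. Consequently $\cone(A)^\vee$ is full-dimensional in $N_\RR$; choosing generators $u_1,\dots,u_N\in N$ of $\cone(A)^\vee$, the map $\phi\colon M\to\ZZ^N$, $m\mapsto(\la m,u_1\ra,\dots,\la m,u_N\ra)$, is an injective group homomorphism — injective because the $u_j$ span $N_\RR$ and the pairing is perfect — and it maps $S\subset M\cap\cone(A)$ into $\NN^N$. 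Thus $\phi$ restricts to an injective semigroup homomorphism $S\hookrightarrow\NN^N$.

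Finally, fix $a\in S$. Any pair $(b,c)\in S\times S$ with $a=b+c$ gives $\phi(a)=\phi(b)+\phi(c)$ with $\phi(b),\phi(c)\in\NN^N$, and $(b,c)\mapsto(\phi(b),\phi(c))$ is injective by injectivity of $\phi$. By Lemma~\ref{finite1} applied to $\NN^N$ there are only finitely many pairs in $\NN^N\times\NN^N$ summing to $\phi(a)$, hence only finitely many pairs $(b,c)$. I expect the only real content to be the claim that pointedness forces $\cone(A)$ to be strongly convex, together with the saturation/clearing-denominators bookkeeping it needs; everything after that is formal, and that step can alternatively be quoted from standard toric references such as \cite{CLS11}.
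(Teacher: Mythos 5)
Your proof is correct and follows essentially the same route as the paper: the easy direction is handled by the same contrapositive trick (a nonzero $v$ with $\pm v\in S$ yields infinitely many decompositions), and the hard direction by embedding $S$ into $\NN^N$ via pairing against integral generators of the dual cone and then invoking Lemma~\ref{finite1}, which is exactly the paper's map $P^*\colon M\to\ZZ^l$ built from the rays of $\sigma=\cone(A)^\vee$. The only difference is that you spell out why pointedness of $S$ forces $\cone(A)$ to be strongly convex, a step the paper delegates to \cite[Proposition 1.2.12]{CLS11}.
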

\begin{proof} 
First we assume $S$ a pointed affine semigroup. If $S$ is a pointed affine semigroup the respective convex polyhedral cones $\sigma^\vee\subset M_\RR$ is pointed and full dimensional thus $\sigma\subset N_\RR$ also is pointed and full dimensional (\cite[Proposition 1.2.12]{CLS11}). If $\sigma(1)=\{r_1,\dots,r_l\}$, we define the homomophism $P\colon\NN^l\rightarrow N$ defined by $e_i\mapsto r_i$ and a dual injective homomorphism $P^*\colon M\rightarrow \ZZ^l$ such that $P^*(S)\subset \NN^l$ (see \cite[Section 2.1]{ADHL15}).
 We can see $b+c=a$ in $S$ if and only if $P^*(b)+P^*(c)=P^*(a)$ in $\NN^l$, by Lemma \ref{finite1} we conclude that for all $a$ in $S$ there exist finite pairs $(b,c)$ such that $a=b+c$. If $S$ is not pointed there exist $a_1\neq 0$ and $a_2\neq 0$ such that $a_1+a_2=0$ thus for all $b,c\in S$ it is verified that $b+c=(b+l a_1)+(l a_2+c)$ for all $l$ in $\NN$.

\end{proof}

\begin{theorem}\label{theorem topological semigroup}
Let $S$ be an affine semigroup endowed with the discrete topology. The semigroup $S_{\infty}$ is a topological semigroup if and only if $S$ is pointed.
\end{theorem}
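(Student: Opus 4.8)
The plan is to verify continuity of the operation $\zeta_+\colon S_\infty\times S_\infty\to S_\infty$ point by point, using as a neighborhood basis of $\infty$ the sets $S_\infty\setminus C$ with $C\subset S$ finite (recall that compact subsets of the discrete space $S$ are exactly the finite ones), and the singletons $\{a\}$ as a neighborhood basis of a point $a\in S$. The observation organizing the argument is that continuity at every point of $S_\infty\times S_\infty$ other than $(\infty,\infty)$ holds automatically, so the whole content of the theorem is concentrated at the pair $(\infty,\infty)$, and there Proposition \ref{finite} is exactly the tool translating the combinatorial hypothesis ``$S$ pointed'' into joint continuity.

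First I would dispose of the easy points. At a pair $(x,y)$ with $x,y\in S$ the sets $\{x\},\{y\}$ are open in $S_\infty$ and $\zeta_+(\{x\}\times\{y\})=\{x+y\}$, so continuity is clear. At a pair $(\infty,y)$ with $y\in S$ (and symmetrically $(x,\infty)$ with $x\in S$), given a basic neighborhood $W=S_\infty\setminus C$ of $\infty=\infty+y$ with $C\subset S$ finite, set $V=\{y\}$ and $D=\{b\in S\mid b+y\in C\}$; by cancellativity the map $b\mapsto b+y$ is injective, so $D$ is finite, $U=S_\infty\setminus D$ is a neighborhood of $\infty$, and $\zeta_+(U\times V)\subset W$. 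Hence $\zeta_+$ is continuous at all such points regardless of whether $S$ is pointed.

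Next I would treat the point $(\infty,\infty)$, which gives both implications. For the ``if'' direction, assume $S$ is pointed and let $W=S_\infty\setminus C$ be a basic neighborhood of $\infty$, $C\subset S$ finite. By Proposition \ref{finite} the set $E=\{(b,c)\in S\times S\mid b+c\in C\}=\bigcup_{a\in C}\{(b,c)\mid b+c=a\}$ is finite; put $C_1=\pi_1(E)$, $C_2=\pi_2(E)$ (finite) and $U=S_\infty\setminus C_1$, $V=S_\infty\setminus C_2$. For $(u,v)\in U\times V$: if either coordinate is $\infty$ then $u+v=\infty\in W$; if $u,v\in S$ then $(u,v)\notin E$, hence $u+v\notin C$, i.e. $u+v\in W$. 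Thus $\zeta_+(U\times V)\subset W$, so $\zeta_+$ is continuous and $S_\infty$ is a topological semigroup. For the ``only if'' direction, assume $S$ is not pointed; by Proposition \ref{finite} there is $a\in S$ admitting infinitely many factorizations $a=b+c$. Take $W=S_\infty\setminus\{a\}$; for any neighborhoods $U=S_\infty\setminus C_1$, $V=S_\infty\setminus C_2$ of $\infty$ with $C_1,C_2\subset S$ finite, cancellativity shows at most $|C_1|+|C_2|$ of the factorizations $(b,c)$ of $a$ can have $b\in C_1$ or $c\in C_2$, so some factorization has $b\notin C_1$ and $c\notin C_2$; then $(b,c)\in U\times V$ but $\zeta_+(b,c)=a\notin W$. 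Hence $\zeta_+$ is not continuous at $(\infty,\infty)$ and $S_\infty$ is not a topological semigroup.

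The main obstacle --- really the only subtle point --- is the joint-continuity analysis at $(\infty,\infty)$: one must see that controlling the preimage of a punctured neighborhood $S_\infty\setminus\{a\}$ amounts precisely to bounding, in each coordinate, the set of factorizations of $a$, and that by cancellativity this ``finite projections'' condition coincides with ``finitely many factorizations'', which is the content of Proposition \ref{finite}. Everything else is bookkeeping with the Alexandroff topology and the discreteness of $S$.
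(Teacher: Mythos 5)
Your proof is correct and takes essentially the same route as the paper: both directions ultimately rest on Proposition \ref{finite}, the forward one by noting that the $\zeta_+$-preimage of a finite subset of $S$ is finite while the preimage of any open set avoiding $\infty$ lies in the discrete part $S\times S$ (you organize this as pointwise continuity with neighborhood bases, the paper as preimages of the two types of closed sets), and the converse by producing infinitely many factorizations of a fixed element that escape every finite set. The only difference is cosmetic: for the converse the paper reduces to an embedded copy of $\ZZ$ and its Example \ref{not topological}, whereas you count factorizations directly using cancellativity, which is slightly more self-contained but the same idea.
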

\begin{proof}
First we will assume $S$ a pointed affine semigroup. Let $F$ be a closed set in $S_\infty$ of type $F\subset S$ and compact in $S$ so $F=\{a_1,\dots a_l\}$ and $\zeta_+^{-1}(F)=\eta_+^{-1}(F)=\cap_{i=1}^l \eta_+^{-1}(\{a_i\})$, by Proposition \ref{finite} $\eta_+^{-1}(\{a_i\})$ is finite or $\eta_+^{-1}(\{a_i\})=\cup_{j=1}^m\{b^{(i)}_j\}\times\{c^{(i)}_j\}$ where $b^{(i)}_j+c^{(i)}_j=a_i$ for all $1\leq j\leq m$ integer, thus $\zeta_+^{-1}(F)$ is a closed in $S_\infty\times S_\infty$. Now we assume $F$ a closed of type $F'\cup\{\infty\}$ where $F'$ is a closed in $S$ and let $U=S\bs F'$. Since $\eta_+$ is surjective because for all $a\in S_\infty$ we have $\eta_+(0,a)=a$, just it is necessary to verify that $\eta_+^{-1}(U)$ is an open in $S_\infty\times S_\infty$. Since definition of the topology of $S_\infty$, the sets $\{a\}\subset S\subset S_\infty$ are open so $\{a\}\times \{b\}$ is an open in $S_\infty\times S_\infty$ for all $a,b\in S$ also since the definition of $\eta_+$, the preimage of $U$ is a subset of $S\times S$ thus is the union of open of the form $\{a\}\times \{b\}$ and we conclude that $U$ is an open and $F$ is a closed. If $S$ is not pointed, there exists an isomorphic copy of $\ZZ$ in $S$ so with a similar argument of Example \ref{not topological} we conclude the theorem.
\end{proof}
In what follows we will assume $S$ pointed affine semigroup so when $S$ is endowed with the discrete topology, $S_\infty$ also will be a topological semigroup endowed with the one-point compactification topology. Now we are interested in to study a topology on $\mathbb{C}[S]$ and $\mathbb{C}[S_\infty]$ preserving some properties of $S$ and $S_\infty$ respectively. If we study topological characteristics of $S_\infty$ we can see that $S_\infty$ is compact, Hausdorff and totally disconnected thus $S_\infty$ is the projective limit of finite semigroups endowed with the discrete topology \cite[Theorem 1]{N57}. An element $a\neq 0$ in $S$ is said to be \emph{irreducible} if $a=b+c$ then $b=a$ or $c=a$, the set of irreducible element of $S$ commonly is denoted by $\mathscr{H}$ and called the Hilbert basis of $S$, additionally $\mathscr{H}$ is a finite set as is proved for example in \cite[Proposition 1.2.23]{CLS11}. We want to describe the finite semigroups in term of Hilbert basis of $S$ that the projective limit give us $S_\infty$.

Let $S$ be an affine semigroup with $\mathscr{H}=\{a_1,\dots,a_s\}\subset S$ Hilbert basis of $S$, by simplicity, we define the function $\mathfrak{s}:S\rightarrow \NN$ defined as $\mathfrak{s}(a)=\operatorname{max}\{\sum_{k=1}^s \alpha_k \mid a=\sum_{k=1}^s \alpha_ka_k$\}. Now we define the sets for all $i\in \NN$, $H_i=\{a \in S\mid \mathfrak{s}(a)\leq i\}$, for example $H_0=\{0\}$ and $H_1=\mathscr{H}\cup\{0\}$. With $H_i$ for all $i\in \NN$ we define the finite semigroup $S_i=H_i\cup\{\infty\}$ where the structure of semigroup is given by: for $a,b\in H_i$, $a+b=\infty$ if $a+b\notin H_i$ and $a+\infty=\infty+a=\infty$ for all $a\in S_i$. The surjective homomorphism $\phi_i\colon S_{i+1}\rightarrow S_i$ are defined by $\phi_i(a)=a$ if $a\in S_i$ and $\phi_i(a)=\infty$ for the other case.

\begin{example}\label{natural compactificated}
Let $S_i=(\{0,1,2,\dots,i,\infty\},+)$ be a topological semigroup endowed with the discrete topology and the semigroup structure is defined by $a+b=\infty$ if $a+b>i$ and $a+\infty=\infty+a=\infty$.

 The semigroup homomorphisms $\varphi_i\colon S_{i+1}\rightarrow S_i$, defined naturally, give us the projective limit $\S=\li S_i$.
$\S$ is isomorphic, as topological semigroup to $\NN\cup\{\infty\}$ where $\NN\cup\{\infty\}$ is the one-point compactification of $\NN$ with the discrete topology.
\end{example}

\begin{lemma}
 Let $S$ be a pointed affine semigroup and $S_i$ described previously then $S_\infty\simeq \li S_i$ as topological semigroup.
\end{lemma}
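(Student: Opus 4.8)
The plan is to exhibit an explicit isomorphism of topological semigroups between $S_\infty$ and the projective limit $\varprojlim S_i$, using the fact, recorded just before the statement, that a compact Hausdorff totally disconnected topological semigroup is the projective limit of its finite continuous quotients. Concretely, I would first observe that the maps $\psi_i\colon S_\infty\to S_i$ defined by $\psi_i(a)=a$ if $a\in H_i$ and $\psi_i(a)=\infty$ otherwise are semigroup homomorphisms: the only thing to check is that $\psi_i(a+b)=\psi_i(a)+\psi_i(b)$, which follows from the definition of the operation on $S_i$ (if $a,b\in H_i$ but $a+b\notin H_i$ both sides are $\infty$; if either of $a,b$ is outside $H_i$ then $a+b\notin H_i$ since $\mathfrak{s}$ is monotone under addition, so again both sides are $\infty$). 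They are compatible with the bonding maps $\phi_i$, i.e. $\phi_i\circ\psi_{i+1}=\psi_i$, so they induce a semigroup homomorphism $\Psi\colon S_\infty\to\varprojlim S_i$, $a\mapsto(\psi_i(a))_{i}$.

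Next I would check that $\Psi$ is a bijection. For injectivity: if $a\neq b$ are distinct elements of $S$, then since $\mathscr{H}$ generates $S$, both lie in $H_i$ for $i$ large enough, and then $\psi_i(a)=a\neq b=\psi_i(b)$; and $\psi_i(\infty)=\infty\neq\psi_i(a)$ for $i$ with $a\in H_i$. For surjectivity: a compatible thread $(x_i)\in\varprojlim S_i$ is either eventually equal to some fixed $a\in S$ (take a large $i$ with $x_i=a\in H_i\subset S$; compatibility forces $x_j=a$ for all $j\ge i$ and $x_j=\phi_{j}\cdots(a)$ — actually one checks $x_j$ for $j<i$ is determined and $x_j\in H_j$ once $j$ is large — this thread is $\Psi(a)$), or else $x_i=\infty$ for all $i$, which is $\Psi(\infty)$. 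The dichotomy comes from the fact that if $x_i\in S$ for some $i$ then by compatibility $x_j\in S$ for all $j\le i$ and, crucially, $x_j=x_i$ for all $j\ge i$, because $\phi_j(x_{j+1})=x_j$ and $x_{j+1}\in S\Rightarrow x_{j+1}\in H_{j+1}$; so if infinitely many $x_i$ lie in $S$ they stabilize to a common element of $S$.

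Finally I would verify that $\Psi$ is a homeomorphism. Since $S_\infty$ is compact (one-point compactification) and $\varprojlim S_i$ is Hausdorff (inverse limit of discrete, hence Hausdorff, spaces), it suffices to show $\Psi$ is continuous; a continuous bijection from a compact space to a Hausdorff space is automatically a homeomorphism. Continuity of $\Psi$ reduces, by the universal property of the inverse limit topology, to continuity of each $\psi_i\colon S_\infty\to S_i$; and since $S_i$ is discrete, this means $\psi_i^{-1}(\{x\})$ is open in $S_\infty$ for each $x\in S_i$. For $x=a\in H_i$ this preimage is the singleton $\{a\}\subset S$, which is open since $S$ is discrete. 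For $x=\infty$ the preimage is $\{\infty\}\cup(S\setminus H_i)$, whose complement is $H_i$, a finite hence compact subset of $S$; so by the definition of the one-point compactification topology this set is open. This establishes continuity, and hence that $\Psi$ is an isomorphism of topological semigroups.

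I do not expect a serious obstacle here; the one point deserving care is the stabilization argument in the surjectivity/dichotomy step, i.e. that a thread cannot ``oscillate'' between $S$ and $\infty$ — this rests on monotonicity of $\mathfrak{s}$ (so $H_i$ is an increasing exhaustion of $S$) together with the explicit form of the bonding maps $\phi_i$, which send $\infty\mapsto\infty$ and fix $H_i$. Everything else is a routine unwinding of the definitions of $S_i$, $\phi_i$, and the one-point compactification topology.
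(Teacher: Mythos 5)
Your proof is correct, and it establishes the same underlying identification as the paper, but it travels in the opposite direction and handles the topology by a genuinely different (and cleaner) device. The paper builds the map $\varprojlim S_i \to S_\infty$ by classifying threads according to their ``distinguished component'' and then verifies the homeomorphism by directly analyzing the basic open sets of the inverse limit (singleton threads are open; an open set containing $(\infty,\infty,\dots)$ has finite complement), which requires a somewhat delicate case analysis; the semigroup-homomorphism property is dismissed there as obvious. You instead construct $\Psi\colon S_\infty\to\varprojlim S_i$ from the projections $\psi_i$, verify compatibility and bijectivity explicitly, and then get the homeomorphism for free from continuity of each $\psi_i$ (preimage of $\infty$ is the complement of the finite, hence compact, set $H_i$) together with the standard fact that a continuous bijection from a compact space to a Hausdorff space is a homeomorphism. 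This buys a shorter and less error-prone topological argument, at the cost of having to check the homomorphism property of the $\psi_i$ (which you do, correctly, via superadditivity of $\mathfrak{s}$). One small slip: in your dichotomy step the claim that $x_i\in S$ forces $x_j\in S$ for all $j\le i$ is false (if $x_i=a\in H_i\setminus H_{i-1}$ then $x_{i-1}=\phi_{i-1}(a)=\infty$); what you actually need, and what you correctly isolate as the crucial point, is only the forward stabilization $x_j=x_i$ for $j\ge i$, with the values for $j<i$ merely being determined by the bonding maps. The argument stands once that sentence is corrected.
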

\begin{proof}
A general element in $\li S_i$ can be expressed it of the form $(a_0,a_1,\dots)$. We have the next options:
\begin{enumerate}
 \item $0=a_0=a_1=a_2=\cdots$,
 \item $\infty=a_0=a_1=a_2=\cdots$,
 \item $\infty=a_0=a_1=a_2=\cdots=a_{l-1}$ and $a_{l}=a_{l+1}=\cdots$ for some $a_{l}\in S$.
\end{enumerate}

So every element $a=(a_0,a_1,\dots)\in \li S_i$ has a component $a_l \in S_\infty$ such that $a_{l}=a_{l+1}=\cdots$. The semigroup homomorphism $\li S_i\rightarrow S_\infty$ is defined by $a=(a_0,a_1,\dots)\mapsto a_l$ where $a_l$ is a component of $a$ such that $a_l=a_{l+1}=\dots$, this component will be called the distinguished component, obviously the function defined is an isomorphism of semigroup. 
 Respect to the topology, for all $a_0\in S$ the set $(\prod_{i\in \NN} U_i)\cap \li S_i$ where $U_i=\{a_0\}$ for all $0\leq i \leq l$ integer and $U_i=S_i$ for all $i>l$ is an open and equal to $\{(a_0,a_0,\dots)\}$ so, every set $U\subset \li S_i$ with out $(\infty,\infty,\dots)$ is an open. On the other hand, if $U=(\prod_{i\in \NN} U_i)\cap \li S_i\neq \li S_i$ is an open with the element $(\infty,\infty,\dots)$, there exists a $l$ such $U_l\neq S_l$ and $U_i=S_i$ for all $i>l$. If $A_i=S_i\bs U_i$ with $i\leq l$ we conclude that $\li S_i\bs U$ is the set of element with component distinguished in $A_i$ for some $i\leq l$ thus $\li S_i\bs U$ is finite also compact and its topology coincides with the topology of $S_\infty$.
\end{proof}

The previous characterization of $S_\infty$ is motivated in to study $\mathbb{C}[S_\infty]$ with a topological structure. In particular, the projective system induced by $S_0\leftarrow S_1 \leftarrow S_2\leftarrow \dots$ where $\phi_i\colon S_{i+1}\rightarrow S_i$ was defined previously, gives us a projective system $\mathbb{C}[S_0]\leftarrow\mathbb{C}[S_1] \leftarrow \mathbb{C}[S_2]\leftarrow \dots$ of topological $\mathbb{C}$-algebras of semigroup where $\mathbb{C}[S_i]$ will be endowed with the discrete topology. To note also, we can define a the $\mathbb{C}$-homomorphisms of semigroup $\mathbb{C}$-algebras $\psi_i\colon \mathbb{C}[S_\infty]\rightarrow \mathbb{C}[S_i]$ and to obtain a liner system of ideals $\mathfrak{a}_0\supset\mathfrak{a}_1\supset \mathfrak{a}_2\supset\dots$ where $\mathfrak{a}_i=\ker\psi_i$ thus a topology on $\mathbb{C}[S_\infty]$ given by $\{\mathfrak{a}_i\}_{i\in \NN}$ and an identification of $\mathbb{C}[S_\infty]\simeq \mathbb{C}[\li S_i]$ in $\li \mathbb{C}[S_i]$ as sub topological $\mathbb{C}$-algebra via $\chi^{a}\mapsto (\chi^{\pi_0(a)},\chi^{\pi_1(a)},\dots)$ (see \cite[Chapter 9]{N68}), generally it is not true that there exists an isomorphic identification of $\mathbb{C}[\li S_i]$ with $\li \mathbb{C}[S_i]$ for arbitrary topological semigroup as is showed in \cite[Example 5.2]{DL21} or in the next example:
\begin{example}
We use the notation in Example \ref{natural compactificated}, and we will show an element in $\li\mathbb{C}[S_i]$ unidentifiable in $\mathbb{C}[\li S_i]\simeq \mathbb{C}[ S_\infty]$. The element is constructed as follow: $f_0=1+\chi^\infty$, $f_1=1+\dfrac{1}{2}\chi^1+\dfrac{1}{2}\chi^\infty$ the next element is $f_2=1+\dfrac{1}{2}\chi^1+\dfrac{1}{4}\chi^2+\dfrac{1}{4}\chi^\infty$ and as general element: $$f_l=\dfrac{1}{2^l}\chi^{\infty}+\sum_{k=0}^l\dfrac{1}{2^k}\chi^k$$
Thus the element $(f_0,f_1,\dots)$ is an element in $\li\mathbb{C}[S_i]$ with out an identification in $\mathbb{C}[ S_\infty]$ since in $\mathbb{C}[ S_\infty]$ just we have finite sums.
\end{example}

 We can see $\li \mathbb{C}[S_i]$ as the completion of $\mathbb{C}[S_\infty]$ respect to the linear system $\{\mathfrak{a}_i\}_{i\in \NN}$ and is usual in the literature $\li \mathbb{C}[S_i]$ will be denoted by $\widehat{\mathbb{C}[S_\infty]}$. In what follows, when $\mathbb{C}[S_\infty]$ has topological structure will be identified as sub topological $\mathbb{C}$-algebra of $\widehat{\mathbb{C}[S_\infty]}$ induced by the linear system $\{\mathfrak{a}_i\}_{i\in \NN}$.

\section{Locally nilpotent derivation on $\mathbb{C}[S_\infty]$}\label{Section:3}
In this section the algebraic object are used with out topology.
If we fix a general element $f=\sum_{k=1}^nc_k\chi^{a_k}+c\chi^\infty\in \mathbb{C}[S_\infty]$ with $c_1,\dots,c_n,c\in \mathbb{C}$ and $a_1,\dots, a_n\in S$ then $f\chi^\infty=0$ if and only if $\sum_{k=1}^n c_k+c=0$, particularly the ideal $I_\infty=\{f\in\mathbb{C}[S_\infty]\mid f\chi^\infty=0\}$ is a subring $I_\infty\subset\mathbb{C}[S_\infty]$ with identity $1-\chi^\infty$. The ideal $I_\infty$ will be fundamental to 
describe the derivations on $\CC[S_\infty]$ as is motivated in Example \ref{example derivation}, 
and to obtain the correspondence of $\Der(\mathbb{C}[S])$ with $\Der(\mathbb{C}[S_\infty])$. The existence of $\chi^\infty$ in $\mathbb{C}[S_\infty]$ together with the Leibniz's rule verified by the derivations, give us restriction on the possibility to define derivations. In the next lemma, we fix the more important restriction on the image of derivations.
 \begin{lemma}\label{0 divisor}
 If $\partial\in\Der(\mathbb{C}[S_\infty])$ then $\partial(\chi^a)=0$ or a $0$ divisor. Particularly $\partial(\chi^\infty)=0$ and $\partial(\chi^a)\in I_{\infty}$ when $a\neq \infty$.
 \end{lemma}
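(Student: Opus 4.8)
The plan is to exploit the idempotency of $\chi^\infty$. Since $\infty+\infty=\infty$ in $S_\infty$, we have $(\chi^\infty)^2=\chi^\infty$ in $\mathbb{C}[S_\infty]$. Applying $\partial$ and the Leibniz rule gives $\partial(\chi^\infty)=2\chi^\infty\partial(\chi^\infty)$. First I would multiply this identity by $\chi^\infty$: using $(\chi^\infty)^2=\chi^\infty$ once more, the right-hand side becomes $2\chi^\infty\partial(\chi^\infty)$ again, so $\chi^\infty\partial(\chi^\infty)=2\chi^\infty\partial(\chi^\infty)$, which forces $\chi^\infty\partial(\chi^\infty)=0$. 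Substituting this back into $\partial(\chi^\infty)=2\chi^\infty\partial(\chi^\infty)$ yields $\partial(\chi^\infty)=0$.

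Next, for $a\in S$ (so $a\neq\infty$), I would start from $\chi^a\chi^\infty=\chi^{a+\infty}=\chi^\infty$. Applying $\partial$ and Leibniz gives $\partial(\chi^a)\chi^\infty+\chi^a\partial(\chi^\infty)=\partial(\chi^\infty)$. Since $\partial(\chi^\infty)=0$ by the previous step, this collapses to $\partial(\chi^a)\chi^\infty=0$, which is exactly the statement $\partial(\chi^a)\in I_\infty$ by the definition $I_\infty=\{f\in\mathbb{C}[S_\infty]\mid f\chi^\infty=0\}$.

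Finally, to conclude that $\partial(\chi^a)$ is either $0$ or a zero divisor for every $a\in S_\infty$: when $a=\infty$ it is $0$; when $a\neq\infty$ it lies in $I_\infty$, and since $\chi^\infty\neq 0$ in $\mathbb{C}[S_\infty]$, any nonzero $f\in I_\infty$ satisfies $f\chi^\infty=0$ with both factors nonzero, hence is a zero divisor. There is essentially no hard step here; the only point requiring a little care is to avoid dividing by the element $1-2\chi^\infty$ appearing in $(1-2\chi^\infty)\partial(\chi^\infty)=0$, which is why I would pass through the multiplication-by-$\chi^\infty$ trick — although in fact $(1-2\chi^\infty)^2=1$, so that element is a unit and the alternative route works equally well.
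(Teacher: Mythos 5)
Your proof is correct and follows essentially the same route as the paper: idempotency of $\chi^\infty$ plus the Leibniz rule to get $\partial(\chi^\infty)=0$, then applying $\partial$ to $\chi^a\chi^\infty=\chi^\infty$ to conclude $\partial(\chi^a)\in I_\infty$. Your multiplication-by-$\chi^\infty$ trick (and the observation that $(1-2\chi^\infty)^2=1$) just makes explicit the ``easy computation'' the paper invokes to rule out $2\chi^\infty-1$ being a zero divisor.
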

 
 \begin{proof}
 First we verify that $\partial(\chi^\infty)=0$. We can see that $\partial(\chi^\infty)=2\chi^\infty \partial(\chi^\infty)$ or $\partial(\chi^\infty)(2\chi^\infty-1)=0$ an easy computation proves that $(2\chi^\infty-1)$ is not a $0$ divisor so $\partial(\chi^\infty)=0$. Now $0=\partial(\chi^\infty)=\partial(\chi^\infty\chi^a)=\chi^\infty \partial(\chi^a)$ thus $\chi^\infty \partial(\chi^a)=0$ or equivalently $\partial(\chi^a)\in I_\infty$.
 \end{proof}
 
 The Lemma \ref{0 divisor} has directed consequences on the structure of the derivations, in the next proposition we make a summary of these consequences:

 \begin{proposition}\label{lemmas}
 If $\partial\in \Der(\mathbb{C}[S_\infty])$, then the following sentences are verified:
 \begin{enumerate}
 \item $\partial$ defines an element $\partial_\infty\in \Der(I_\infty)$ where $\partial_\infty$ is the natural restriction.
 \item $(1+c\chi^\infty)\partial(f)=\partial(f)$ for all $c\in \CC$ and for all $f\in \CC[S_\infty]$.
 \item If $\partial$ is a homogeneous derivation, then $\partial=0$.
 \end{enumerate}
 \end{proposition}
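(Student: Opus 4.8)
The strategy is to extract all three conclusions from Lemma \ref{0 divisor} plus a direct exploitation of the relation $\chi^a \chi^\infty = \chi^\infty$ that governs multiplication in $\CC[S_\infty]$. For part (1): by Lemma \ref{0 divisor}, $\partial(\chi^a)\in I_\infty$ for $a\neq\infty$ and $\partial(\chi^\infty)=0$; hence by $\CC$-linearity $\partial(I_\infty)\subset$? I would first check that $I_\infty$ is $\partial$-stable. Write $f\in I_\infty$, i.e. $f\chi^\infty=0$. Then $0=\partial(f\chi^\infty)=\chi^\infty\partial(f)+f\partial(\chi^\infty)=\chi^\infty\partial(f)$, so $\partial(f)\in I_\infty$. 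Thus $\partial$ restricts to a $\CC$-linear endomorphism $\partial_\infty$ of $I_\infty$, and the Leibniz rule is inherited verbatim from $\CC[S_\infty]$, so $\partial_\infty\in\Der(I_\infty)$. (One should note that $I_\infty$ has unit $1-\chi^\infty$, as already observed in the text, and $\partial_\infty(1-\chi^\infty)=0$, consistent with $\partial_\infty$ being a derivation of a ring with that unit.)

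For part (2): from Lemma \ref{0 divisor}, for any monomial $\chi^a$ with $a\neq\infty$ we have $\partial(\chi^a)\in I_\infty$, meaning $\chi^\infty\partial(\chi^a)=0$, and also $\partial(\chi^\infty)=0=\chi^\infty\cdot 0$. By $\CC$-linearity, $\chi^\infty\partial(f)=0$ for every $f\in\CC[S_\infty]$. Multiplying by $c\in\CC$ and adding $\partial(f)$ gives $(1+c\chi^\infty)\partial(f)=\partial(f)+c\chi^\infty\partial(f)=\partial(f)$, which is exactly the claim.

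For part (3): suppose $\partial$ is homogeneous, i.e. it sends $S_\infty$-homogeneous elements to $S_\infty$-homogeneous elements. The elements $\chi^a$ ($a\in S$) and $\chi^\infty$ are the homogeneous generators. By Lemma \ref{0 divisor}, $\partial(\chi^\infty)=0$, so I only need to handle $\chi^a$ for $a\in S$. Since $\partial$ is homogeneous, $\partial(\chi^a)=\lambda_a\chi^{b(a)}$ for some scalar $\lambda_a\in\CC$ and some $b(a)\in S_\infty$ (or $\partial(\chi^a)=0$). But by Lemma \ref{0 divisor}, $\partial(\chi^a)\in I_\infty$, and the only homogeneous elements lying in $I_\infty$ are $0$: indeed a nonzero homogeneous element is $\lambda\chi^b$ with $\lambda\neq0$, and $\lambda\chi^b\chi^\infty=\lambda\chi^\infty\neq0$, so $\lambda\chi^b\notin I_\infty$ unless $\lambda=0$. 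Hence $\partial(\chi^a)=0$ for all $a\in S$, and together with $\partial(\chi^\infty)=0$ and $\CC$-linearity this forces $\partial=0$.

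None of the three steps presents a serious obstacle; the only point requiring a little care is the observation in part (3) that a nonzero $S_\infty$-homogeneous element cannot annihilate $\chi^\infty$ — but this is immediate from $\chi^b\chi^\infty=\chi^\infty$ for every $b\in S_\infty$. I would present the proof in the order (1), (2), (3), each time simply invoking Lemma \ref{0 divisor} and the multiplication rule of $\CC[S_\infty]$.
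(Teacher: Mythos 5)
Your proposal is correct and takes the same route as the paper, which simply states that all three parts are immediate consequences of Lemma \ref{0 divisor}; you have supplied the details (the $\partial$-stability of $I_\infty$ via $0=\partial(f\chi^\infty)=\chi^\infty\partial(f)$, the identity $\chi^\infty\partial(f)=0$ for part (2), and the observation that no nonzero homogeneous element lies in $I_\infty$ for part (3)), all of which check out.
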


\begin{proof}
The proofs are immediate consequence of Lemma \ref{0 divisor}.
\end{proof}

\begin{example}\label{example derivation}
Let $S_\infty=\NN\cup\{\infty\}$ be a semigroup with $a+\infty=\infty$ for all $a\in S_\infty$.
The ring of semigroup verifies:
 $$\mathbb{C}[S_\infty]\simeq\mathbb{C}[x,y]/(xy-y,y^2-y)\simeq \mathbb{C}[\chi^1,\chi^{\infty}]$$
 
 In the last $\mathbb{C}$-algebra $\chi^1\chi^{\infty}=\chi^{\infty}$ and $\chi^{\infty}\chi^{\infty}=\chi^{\infty}$. We can define the derivation
 
\[
\begin{array}{rcl} 
 \partial\colon \mathbb{C}[S_\infty]& \rightarrow &\mathbb{C}[S_\infty] \\ 
 \chi^1&\mapsto &1-\chi^\infty\\
 \chi^\infty&\mapsto &0
 \end{array}
\]

It is easy to see $\partial$ is well defined as derivation such that $\partial\in \operatorname{LND}(\mathbb{C}[S_\infty])$ and since definition of $\partial$, it is not possible to obtain a decomposition in $S_\infty$-homogeneous derivations.

\end{example}

\begin{proposition}
Let $\partial,\partial'$ be derivations in $\Der(\mathbb{C}[S_\infty])$. Then $\partial=\partial'$ if and only if $\partial_\infty=\partial'_\infty$.
\end{proposition}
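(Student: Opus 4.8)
The plan is to prove the two implications separately. The forward implication is immediate: if $\partial=\partial'$ then in particular their natural restrictions to the subring $I_\infty$ agree, so $\partial_\infty=\partial'_\infty$, and nothing more is needed. All the content is in the reverse implication.

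For the reverse direction, the strategy is to produce an explicit formula that recovers $\partial$ from $\partial_\infty$, so that equality of the restrictions forces equality of the derivations. The key observation is that for every $f\in\mathbb{C}[S_\infty]$ the element $f(1-\chi^\infty)$ lies in $I_\infty$: indeed $f(1-\chi^\infty)\chi^\infty=f(\chi^\infty-\chi^\infty\chi^\infty)=0$ because $\chi^\infty$ is idempotent. I would then apply the Leibniz rule to $f(1-\chi^\infty)$, using Lemma \ref{0 divisor} which gives $\partial(\chi^\infty)=0$ and hence $\partial(1-\chi^\infty)=0$, to get
\[
\partial\bigl(f(1-\chi^\infty)\bigr)=\partial(f)(1-\chi^\infty)+f\,\partial(1-\chi^\infty)=\partial(f)(1-\chi^\infty).
\]
Since $\partial(f)\in I_\infty$ by Proposition \ref{lemmas}(2), equivalently $\partial(f)\chi^\infty=0$, the right-hand side equals $\partial(f)$, so $\partial(f)=\partial_\infty\bigl(f(1-\chi^\infty)\bigr)$ for every $f$. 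The identical computation gives $\partial'(f)=\partial'_\infty\bigl(f(1-\chi^\infty)\bigr)$, and the hypothesis $\partial_\infty=\partial'_\infty$ then yields $\partial(f)=\partial'(f)$ for all $f\in\mathbb{C}[S_\infty]$, i.e. $\partial=\partial'$.

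I do not expect any serious obstacle here, since the essential structural facts were already established in Lemma \ref{0 divisor} and Proposition \ref{lemmas}. The only points that need a word of care are checking that $f(1-\chi^\infty)$ genuinely lands in $I_\infty$ and that $\partial$ maps $\mathbb{C}[S_\infty]$ into $I_\infty$, so that the symbol $\partial_\infty\bigl(f(1-\chi^\infty)\bigr)$ is meaningful and equals $\partial(f)$; both follow at once from the cited results together with idempotency of $\chi^\infty$. An equivalent and perhaps cleaner write-up restricts attention to generators: for $a\neq\infty$ one has $\chi^a-\chi^\infty=\chi^a(1-\chi^\infty)\in I_\infty$ and $\partial(\chi^a)=\partial(\chi^a-\chi^\infty)=\partial_\infty(\chi^a-\chi^\infty)$, while $\partial(\chi^\infty)=0$; since $\{\chi^a\}_{a\in S_\infty}$ spans $\mathbb{C}[S_\infty]$, this again determines $\partial$ from $\partial_\infty$.
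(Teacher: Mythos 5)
Your proposal is correct and follows essentially the same route as the paper: the paper likewise applies $\partial_\infty$ to the elements $\chi^a(1-\chi^\infty)=\chi^a-\chi^\infty\in I_\infty$ and uses $\partial(\chi^\infty)=0$ to recover $\partial(\chi^a)$, which is exactly the generator-level version you sketch at the end. Your general-$f$ formula $\partial(f)=\partial_\infty\bigl(f(1-\chi^\infty)\bigr)$ is just a mild repackaging of the same computation, with all the needed facts correctly sourced from Lemma \ref{0 divisor} and Proposition \ref{lemmas}.
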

\begin{proof}
Let $\partial,\partial'$ be derivations in $\Der(\mathbb{C}[S_\infty])$ such that $\partial_\infty=\partial'_\infty$. If $a\in S$ then $\chi^a(1-\chi^\infty)\in I_\infty$, since $\partial_\infty=\partial'_\infty$, we have $\partial_\infty(\chi^a(1-\chi^\infty))=\partial'_\infty(\chi^a(1-\chi^\infty))$ or $\partial_\infty(\chi^a-\chi^\infty)=\partial'_\infty(\chi^a-\chi^\infty)$ thus $\partial_{\infty}(\chi^a)=\partial'_\infty(\chi^a)$ equivalently $\partial(\chi^a)=\partial'(\chi^a)$. The other direction is obvious.

\end{proof}

In general, we can see $\partial(\chi^a)=\sum_{k=1}^nc_k\chi^{a_k}+c\chi^\infty\in \mathbb{C}[S_\infty]$ with $c_1,\dots,c_n,c\in \mathbb{C}$ and $a_1,\dots, a_n\in S$. With this idea we are going to denote $f_{\partial(a)}=\sum_{k=1}^nc_k\chi^{a_k}$ and $c_{\partial(a)}=c$, if $a=\infty$ we fix $f_{\partial(\infty)}=0$ and $c_{\partial(\infty)}=0$ .

\begin{proposition}\label{translation}
With the previous notation. If $\partial\in \Der(\mathbb{C}[S_\infty])$, then the function $\partial'\colon \mathbb{C}[S]\rightarrow \mathbb{C}[S]$ defined by $\chi^a\mapsto f_{\partial(a)}$ verifies to be an element $\partial'\in \Der(\mathbb{C}[S])$.
\end{proposition}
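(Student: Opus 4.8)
The plan is to realise $\partial'$ as the composite of $\partial$ with a ring quotient. First I would observe that $\CC\cdot\chi^\infty$ is an ideal of $\CC[S_\infty]$ — indeed $\chi^a\chi^\infty=\chi^\infty$ for every $a\in S$ and $\chi^\infty\chi^\infty=\chi^\infty$ — and that the quotient $\CC[S_\infty]/\CC\cdot\chi^\infty$ is canonically isomorphic to $\CC[S]$ via $\overline{\chi^a}\mapsto\chi^a$, because the classes of $\{\chi^a\}_{a\in S}$ form a $\CC$-basis with the same multiplication table as in $\CC[S]$. Composing the quotient map with this isomorphism produces a surjective $\CC$-algebra homomorphism $\pi\colon\CC[S_\infty]\to\CC[S]$ which is simply the $\CC$-linear projection fixing each $\chi^a$ ($a\in S$) and sending $\chi^\infty$ to $0$. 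In particular $\pi(\partial(\chi^a))=\pi\bigl(f_{\partial(a)}+c_{\partial(a)}\chi^\infty\bigr)=f_{\partial(a)}$, so, viewing $\CC[S]$ as the unital subring $\bigoplus_{a\in S}\CC\cdot\chi^a$ of $\CC[S_\infty]$, the map $\partial'$ is exactly $\pi\circ\bigl(\partial|_{\CC[S]}\bigr)$.

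From this description the two required properties are immediate. $\CC$-linearity of $\partial'$ follows since both $\pi$ and $\partial$ are $\CC$-linear. For Leibniz's rule, take $f,g\in\CC[S]$; applying $\partial$ to $fg$ inside $\CC[S_\infty]$, then using that $\pi$ is a ring homomorphism and restricts to the identity on $\CC[S]$, one gets
$$\partial'(fg)=\pi\bigl(f\,\partial(g)+g\,\partial(f)\bigr)=\pi(f)\pi(\partial(g))+\pi(g)\pi(\partial(f))=f\,\partial'(g)+g\,\partial'(f),$$
and $\partial'(1)=\pi(\partial(1))=\pi(0)=0$. Hence $\partial'\in\Der(\CC[S])$. (Alternatively, one can avoid mentioning $\pi$ and argue by components: apply $\partial$ to $\chi^{a+b}=\chi^a\chi^b$, use $\chi^a\chi^\infty=\chi^b\chi^\infty=\chi^\infty$, and compare the $\CC[S]$-parts in the direct sum $\CC[S_\infty]=\CC[S]\oplus\CC\cdot\chi^\infty$ to obtain $f_{\partial(a+b)}=\chi^a f_{\partial(b)}+\chi^b f_{\partial(a)}$; both sides of the desired identity are $\CC$-bilinear in $(f,g)$ and agree on these basis pairs, hence agree everywhere.)

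I do not anticipate a real obstacle: the only point that genuinely needs checking is that the splitting $\CC[S_\infty]=\CC[S]\oplus\CC\cdot\chi^\infty$ is compatible with multiplication in the precise sense used above — equivalently, that $\CC\cdot\chi^\infty$ is an ideal and $\CC[S]$ a subring on which $\pi$ restricts to the identity — and this is routine from the relation $a+\infty=\infty$. Note that Lemma \ref{0 divisor} is not needed for this statement; it does, however, pin down $c_{\partial(a)}=-\sum_k c_k$ when $f_{\partial(a)}=\sum_k c_k\chi^{a_k}$, information that will presumably matter for the converse direction of the correspondence.
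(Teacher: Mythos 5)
Your proof is correct and is essentially the paper's argument: the paper applies $\partial$ to $\chi^{a+b}=\chi^a\chi^b$ and compares components in the splitting $\CC[S_\infty]=\CC[S]\oplus\CC\cdot\chi^\infty$ to get $f_{\partial(a+b)}=f_{\partial(a)}\chi^b+f_{\partial(b)}\chi^a$, which is exactly your parenthetical alternative. Your main version merely repackages that component comparison as composition with the quotient homomorphism $\pi$ killing the ideal $\CC\cdot\chi^\infty$ --- a slightly cleaner way of saying the same thing, and the key verification (that $\CC\cdot\chi^\infty$ is an ideal, so the projection is multiplicative) is sound.
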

\begin{proof}
To verify that $\partial'\in \Der(\mathbb{C}[S])$ we need to prove that $f_{\partial(a+b)}=f_{\partial(a)}\chi^{b}+f_{\partial(b)}\chi^{a}$ for all $a, b\in S$. $f_{\partial(a+b)}+c_{\partial(a+b)}\chi^\infty=\partial(\chi^{a+b})=\partial(\chi^{a}\chi^{b})=(f_{\partial(a)}+c_{\partial(a)}\chi^{\infty})\chi^{b}+(f_{\partial(b)}+c_{\partial(b)}\chi^{\infty})\chi^{a}=f_{\partial(a)}\chi^{b}+f_{\partial(b)}\chi^{a}+(c_{\partial(a)}+c_{\partial(b)})\chi^{\infty}$. Thus $\partial'\in \Der(\mathbb{C}[S])$.
\end{proof}

\begin{proposition}
If $\partial'\in \Der(\mathbb{C}[S])$ and $f\in I_\infty\subset \mathbb{C}[S_\infty]$, then the function $\partial=f\partial'\colon\mathbb{C}[S_\infty]\rightarrow\mathbb{C}[S_\infty]$ defined by $\partial(\chi^\infty)=0$ and $\partial(\chi^a)=f\partial'(\chi^a)$ is an element $\partial\in\Der(\mathbb{C}[S_\infty])$. Particularly any $\partial\in \Der(\mathbb{C}[S_\infty])$ verifies $\partial(\chi^{a})=(1-\chi^\infty)\partial'(\chi^a)$ for all $a\in S$ where $\partial'\in \Der(\mathbb{C}[S])$.
 \end{proposition}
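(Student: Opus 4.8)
The plan is to prove the two assertions separately. For the first, I would verify directly that the $\CC$-linear map $\partial=f\partial'$, defined on the basis $\{\chi^a:a\in S\}\cup\{\chi^\infty\}$ of $\CC[S_\infty]$ by $\partial(\chi^a)=f\partial'(\chi^a)$ and $\partial(\chi^\infty)=0$ and then extended $\CC$-linearly, satisfies Leibniz's rule. Since defining $\partial$ on a basis and extending linearly is automatically well defined, the only thing to check is the product rule, and since both sides of it are $\CC$-bilinear it suffices to check it on products of basis elements. There are exactly three types: $\chi^a\chi^b=\chi^{a+b}$ with $a,b\in S$, $\chi^a\chi^\infty=\chi^\infty$ with $a\in S$, and $\chi^\infty\chi^\infty=\chi^\infty$.

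For the first type, the identity $\partial(\chi^{a+b})=\chi^a\partial(\chi^b)+\chi^b\partial(\chi^a)$ reduces, after factoring out $f$, to $f\partial'(\chi^{a+b})=f\bigl(\chi^a\partial'(\chi^b)+\chi^b\partial'(\chi^a)\bigr)$, which holds because $\partial'\in\Der(\CC[S])$. For the second type, one side is $\partial(\chi^\infty)=0$ and the other is $\chi^a\cdot 0+\chi^\infty\cdot f\partial'(\chi^a)=(\chi^\infty f)\partial'(\chi^a)$, which vanishes precisely because $f\in I_\infty$ means $\chi^\infty f=0$; this is the one place the hypothesis $f\in I_\infty$ enters. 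The third type is immediate since $\partial(\chi^\infty)=2\chi^\infty\partial(\chi^\infty)=0$. Finally $\partial(1)=f\partial'(1)=0$, so $\partial$ kills constants, and thus $\partial\in\Der(\CC[S_\infty])$.

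For the second assertion, given $\partial\in\Der(\CC[S_\infty])$ I would invoke Proposition \ref{translation} to obtain $\partial'\in\Der(\CC[S])$ with $\partial'(\chi^a)=f_{\partial(a)}$, where $\partial(\chi^a)=f_{\partial(a)}+c_{\partial(a)}\chi^\infty$ and $f_{\partial(a)}=\sum_{k=1}^n c_k\chi^{a_k}$ with $a_k\in S$. By Lemma \ref{0 divisor}, $\partial(\chi^a)\in I_\infty$, i.e. $\chi^\infty\partial(\chi^a)=0$; expanding this gives $\bigl(\sum_k c_k+c_{\partial(a)}\bigr)\chi^\infty=0$, hence $c_{\partial(a)}=-\sum_k c_k$. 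Consequently $f_{\partial(a)}\chi^\infty=\bigl(\sum_k c_k\bigr)\chi^\infty=-c_{\partial(a)}\chi^\infty$, so $(1-\chi^\infty)\partial'(\chi^a)=f_{\partial(a)}-f_{\partial(a)}\chi^\infty=f_{\partial(a)}+c_{\partial(a)}\chi^\infty=\partial(\chi^a)$, as claimed. Since $1-\chi^\infty\in I_\infty$, this exhibits $\partial$ in exactly the form produced by the first assertion.

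The argument is essentially bookkeeping, so I do not anticipate a genuine obstacle; the only point requiring a little care is the compatibility of the multiplication on $\CC[S]$ with that on $\CC[S_\infty]$ used in the first Leibniz check, which follows from the fact that the semigroup inclusion $S\hookrightarrow S_\infty$ induces an injective $\CC$-algebra homomorphism $\CC[S]\hookrightarrow\CC[S_\infty]$ identifying $\CC[S]$ with the span of $\{\chi^a:a\in S\}$.
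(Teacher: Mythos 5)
Your proof is correct and follows essentially the same route as the paper: the first part reduces to checking Leibniz's rule on products involving $\chi^\infty$, where the hypothesis $f\in I_\infty$ (i.e.\ $\chi^\infty f=0$) is exactly what is needed, and the second part combines Proposition \ref{translation} with Lemma \ref{0 divisor} to rewrite $\partial(\chi^a)$ as $(1-\chi^\infty)\partial'(\chi^a)$. Your version is slightly more explicit (checking all three types of basis products and tracking the coefficient identity $c_{\partial(a)}=-\sum_k c_k$), but the substance is identical to the paper's argument.
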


\begin{proof}
To verify that effectively $\partial\in \Der(\mathbb{C}[S_\infty])$ just we need to verify the Leibniz's rule on $\chi^a\chi ^\infty=\chi^\infty$ for all $a\in S_\infty$. We assume $a\neq \infty$, when $a=\infty$ there is no problem. So $0=\partial(\chi^\infty)=\partial(\chi^a\chi^\infty)=\chi^\infty \partial(\chi^a)=\chi^\infty f \partial'(\chi^a)=0$. To verify the last part if $\partial\in\Der(\mathbb{C}[S_\infty])$ in the same form that Proposition \ref{translation} we define the element $\partial'\in \Der(\mathbb{C}[S])$. 
If $a\in S$, $(1-\chi^\infty)\partial'(\chi^{a})=(1-\chi^\infty)(\partial(\chi^a)-c_{\partial(a)}\chi^\infty)=\partial(\chi^a)-\chi^\infty \partial(\chi^a)-c_{\partial(a)}\chi^\infty+c_{\partial(a)}\chi^\infty=\partial(\chi^a)-\chi^\infty \partial(\chi^a)$ thus since Lemma \ref{0 divisor} $\partial(\chi^a)-\chi^\infty \partial(\chi^a)=\partial(\chi^a)$ and we conclude the last part of the proposition.

\end{proof}

\begin{theorem}\label{correspondence}
There exists an one to one correspondence between $\Der(\mathbb{C}[S])$ and $\Der(\mathbb{C}[S_\infty])$.
\end{theorem}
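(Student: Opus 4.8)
The plan is to exhibit two explicit maps between $\Der(\mathbb{C}[S])$ and $\Der(\mathbb{C}[S_\infty])$ coming from the two preceding propositions, and then to check that they are mutually inverse. Define $\Phi\colon \Der(\mathbb{C}[S_\infty])\to \Der(\mathbb{C}[S])$ by $\Phi(\partial)=\partial'$, where $\partial'(\chi^a)=f_{\partial(a)}$ is the ``finite part'' of $\partial(\chi^a)$; this is well defined and lands in $\Der(\mathbb{C}[S])$ by Proposition \ref{translation}. In the other direction, define $\Psi\colon \Der(\mathbb{C}[S])\to \Der(\mathbb{C}[S_\infty])$ by $\Psi(\partial')=(1-\chi^\infty)\partial'$, i.e. $\Psi(\partial')(\chi^\infty)=0$ and $\Psi(\partial')(\chi^a)=(1-\chi^\infty)\partial'(\chi^a)$ for $a\in S$; since $1-\chi^\infty\in I_\infty$, this is a derivation of $\mathbb{C}[S_\infty]$ by the previous proposition.

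First I would check $\Phi\circ\Psi=\mathrm{id}_{\Der(\mathbb{C}[S])}$. Given $\partial'\in\Der(\mathbb{C}[S])$, write $\partial'(\chi^a)=\sum_k c_k\chi^{a_k}$ with all $a_k\in S$ (there is no $\chi^\infty$-term in $\mathbb{C}[S]$). Then $\Psi(\partial')(\chi^a)=\sum_k c_k(\chi^{a_k}-\chi^\infty)=\sum_k c_k\chi^{a_k}-\bigl(\sum_k c_k\bigr)\chi^\infty$, so its finite part is exactly $\sum_k c_k\chi^{a_k}=\partial'(\chi^a)$; hence $\Phi(\Psi(\partial'))=\partial'$.

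Next I would check $\Psi\circ\Phi=\mathrm{id}_{\Der(\mathbb{C}[S_\infty])}$. Fix $\partial\in\Der(\mathbb{C}[S_\infty])$ and put $\partial'=\Phi(\partial)$. By the last assertion of the previous proposition, $\partial(\chi^a)=(1-\chi^\infty)\partial'(\chi^a)$ for every $a\in S$, which is precisely $\Psi(\partial')(\chi^a)$; and $\partial(\chi^\infty)=0=\Psi(\partial')(\chi^\infty)$ by Lemma \ref{0 divisor}. Since $\mathbb{C}[S_\infty]$ is generated as a $\mathbb{C}$-algebra by $\{\chi^a\mid a\in S_\infty\}$ and a derivation is determined by its values on a generating set, we conclude $\Psi(\Phi(\partial))=\partial$. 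Thus $\Phi$ and $\Psi$ are mutually inverse bijections, giving the desired one-to-one correspondence.

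All the computations are routine; the only points that need care are the bookkeeping behind the well-definedness of $\Phi$ (already handled in Proposition \ref{translation}) and the remark that a derivation of $\mathbb{C}[S_\infty]$ is pinned down by its action on the $\chi^a$, so that verifying the two inverse identities on generators is enough. I do not expect a substantive obstacle beyond these verifications.
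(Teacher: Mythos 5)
Your proposal is correct and follows essentially the same route as the paper: both use the "finite part" map of Proposition \ref{translation} in one direction and multiplication by $1-\chi^\infty$ in the other, relying on the preceding proposition's assertion that every $\partial\in\Der(\mathbb{C}[S_\infty])$ satisfies $\partial(\chi^a)=(1-\chi^\infty)\partial'(\chi^a)$. Your write-up is merely more explicit than the paper's in checking that the two maps are mutually inverse on generators.
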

\begin{proof}
Since the previous propositions just we need to prove that any derivation $\partial'\in \Der(\mathbb{C}[S])$ verifies $\partial(\chi^a)=\partial'(\chi^a)+c_{\partial(a)}\chi^\infty$ for some derivation $\partial\in \Der(\mathbb{C}[S_\infty])$ and for all $a\in S$. Let $\partial'\in \Der(\mathbb{C}[S])$ be a derivation and let $\partial$ be the derivation defined by $\partial(\chi^a)=(1-\chi^\infty)\partial'(\chi^a)$ if $a\in S$ and $\partial(\chi^\infty)=0$. Thus $\partial(\chi^a)=(1-\chi^\infty)\partial'(\chi^a)=\partial'(\chi^a)-\partial'(\chi^a)\chi^\infty$, obviously $-\partial'(\chi^a)\chi^\infty=c_{\partial(a)}\chi^\infty$ since the definition of $\partial$.
\end{proof}

\begin{remark}
With the correspondence described we can see that $\Der(\mathbb{C}[S])$ and $\Der(\mathbb{C}[S_\infty])$ are isomorphics as $\CC$-vectorial spaces. Basically if $\partial_1$ and $\partial_2$ in $\Der(\mathbb{C}[S_\infty])$ we have that for all $a\in S$ it is verified $(\partial_1+\partial_2)(\chi^a)=\partial_1(\chi^a)+\partial_2(\chi^a)=(1-\chi^\infty)\partial'_1(\chi^a)+(1-\chi^\infty)\partial'_2(\chi^a)$, with $\partial'_1$ and $\partial'_2$ in $\Der(\mathbb{C}[S_\infty])$, then $(\partial_1+\partial_2)(\chi^a)=(1-\chi^\infty)(\partial'_1+\partial'_2)(\chi^a)$.

\end{remark}

\begin{proposition}\label{LND}
With the correspondence previously defined, $\partial\in\LND(\mathbb{C}[S_\infty])$ if and only if the associated derivation verifies $\partial'\in\LND(\mathbb{C}[S])$.
\end{proposition}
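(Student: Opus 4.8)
The plan is to route the argument through the subring $I_\infty$, using that $1-\chi^\infty$ is idempotent — indeed $(1-\chi^\infty)^2 = 1-2\chi^\infty+\chi^\infty\chi^\infty = 1-\chi^\infty$ — and that it is the identity element of $I_\infty$. Throughout, $\partial$ and $\partial'$ are the derivations matched by the correspondence of Theorem \ref{correspondence}, so that $\partial(\chi^a)=(1-\chi^\infty)\partial'(\chi^a)$ for $a\in S$ and $\partial(\chi^\infty)=0$.

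First I would observe that $\iota\colon\CC[S]\to I_\infty$, $\chi^a\mapsto(1-\chi^\infty)\chi^a=\chi^a-\chi^\infty$, is an isomorphism of $\CC$-algebras: it is multiplicative since $(\chi^a-\chi^\infty)(\chi^b-\chi^\infty)=\chi^{a+b}-\chi^\infty$; it is injective because the $\chi^a$ with $a\in S$ are linearly independent in $\CC[S_\infty]$; and it is surjective because every $f=\sum_k c_k\chi^{a_k}+c\chi^\infty\in I_\infty$ satisfies $c=-\sum_k c_k$, hence $f=\sum_k c_k(\chi^{a_k}-\chi^\infty)$. Next I would check that $\iota$ conjugates $\partial'$ to the restriction $\partial_\infty\in\Der(I_\infty)$ furnished by Proposition \ref{lemmas}(1): for $a\in S$ one has $\partial_\infty(\iota(\chi^a))=\partial(\chi^a)=(1-\chi^\infty)\partial'(\chi^a)=\iota(\partial'(\chi^a))$, since on $\CC[S]\subset\CC[S_\infty]$ the map $\iota$ is simply multiplication by $1-\chi^\infty$. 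As conjugate derivations are simultaneously locally nilpotent (from $\partial_\infty^{(n)}\circ\iota=\iota\circ(\partial')^{(n)}$ together with the bijectivity of $\iota$), this already gives $\partial'\in\LND(\CC[S])$ if and only if $\partial_\infty\in\LND(I_\infty)$.

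It then suffices to show $\partial\in\LND(\CC[S_\infty])$ if and only if $\partial_\infty\in\LND(I_\infty)$. The forward direction is immediate: by Lemma \ref{0 divisor} the ideal $I_\infty$ is $\partial$-invariant, so $\partial_\infty$ is the restriction of a locally nilpotent derivation and is hence locally nilpotent. For the converse, let $g=\sum_k c_k\chi^{a_k}+c\chi^\infty\in\CC[S_\infty]$ be arbitrary. Since $\chi^a\chi^\infty=\chi^\infty$ for every $a$, we get $g\chi^\infty=\big(c+\sum_k c_k\big)\chi^\infty$, hence $\partial(g\chi^\infty)=0$ by Lemma \ref{0 divisor}, and therefore $\partial(g)=\partial\big(g(1-\chi^\infty)\big)$. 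Since $g(1-\chi^\infty)\in I_\infty$ and $I_\infty$ is $\partial$-invariant, iteration yields $\partial^{(n)}(g)=\partial_\infty^{(n)}\big(g(1-\chi^\infty)\big)$ for all $n\geq1$; local nilpotence of $\partial_\infty$ makes the right-hand side vanish for $n$ large, so $\partial\in\LND(\CC[S_\infty])$. Chaining the two equivalences proves the proposition.

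The only mildly delicate point is the bookkeeping in the last paragraph: one must peel off the $\chi^\infty$-component of $g$ before iterating, because the naive ``finite part'' projection $\CC[S_\infty]\to\CC[S]$ is not a ring homomorphism. Once $\partial(g)$ has been rewritten as an element of $I_\infty$, though, this difficulty disappears, and everything else is a formal consequence of $1-\chi^\infty$ being idempotent and of $\partial(\chi^\infty)=0$.
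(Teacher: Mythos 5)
Your proof is correct, and its computational core coincides with the paper's: both arguments ultimately rest on the identity $\partial^{(n)}(\chi^a)=(1-\chi^\infty)\,\partial'^{(n)}(\chi^a)$ (in your version, $\partial_\infty^{(n)}\circ\iota=\iota\circ\partial'^{(n)}$) together with the fact that multiplication by $1-\chi^\infty$ is injective on $\CC[S]$. The difference is one of packaging, and it buys you something. The paper iterates the formula directly on the generators $\chi^a$ and concludes; it tacitly uses that local nilpotence need only be checked on algebra generators, and it never has to mention $I_\infty$. You instead promote the observation to a structural statement: $1-\chi^\infty$ is idempotent, $\iota\colon\CC[S]\to I_\infty$ is an algebra isomorphism conjugating $\partial'$ to the restriction $\partial_\infty$ of Proposition \ref{lemmas}(1), and $\partial$ is locally nilpotent iff $\partial_\infty$ is. This makes the equivalence $\partial'\in\LND(\CC[S])\Leftrightarrow\partial_\infty\in\LND(I_\infty)$ a formal consequence of conjugation, and your last paragraph --- rewriting an arbitrary $g$ as $g(1-\chi^\infty)$ modulo the kernel of $\partial$ before iterating --- handles general elements of $\CC[S_\infty]$ explicitly, a step the paper leaves implicit. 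The price is length; the gain is that the decomposition $\CC[S_\infty]=I_\infty\oplus\CC\chi^\infty$ and the identification $I_\infty\simeq\CC[S]$ are made visible, which also illuminates the correspondence of Theorem \ref{correspondence} itself.
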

\begin{proof}

Let $\partial'\in \Der(\mathbb{C}[S])$ be a derivation. The associated derivation in $\Der(\mathbb{C}[S_\infty])$ defined by $\partial(\chi^\infty)=0$ and $\partial(\chi^a)=(1-\chi^\infty)\partial'(\chi^a)$ when $a\in S$ verifies $\partial^{(n)}(\chi^a)=(1-\chi^\infty) \partial'^{(n)}(\chi^a)$ thus if $\partial'$ is locally nilpotent derivation then $\partial$ is a locally nilpotent derivation. In the other direction if $\partial$ is locally nilpotent derivation, there exists $n$ such that $(1-\chi^\infty) \partial'^{(n)}(\chi^a)=0$ for all $a\in S$, thus $\partial'^{(n)}(\chi^a)-\partial'^{(n)}(\chi^a)\chi^\infty=0$ if and only if $\partial'^{(n)}(\chi^a)=0$ since $\partial'^{(n)}(\chi^a)$ is an element in $\CC[S]$.
\end{proof}

\section{Topologically integrable quasi-homogeneous derivation on $\mathbb{C}[S_\infty]$}\label{Section:4}
Since the correspondence proved in Theorem \ref{correspondence} and the impossibility to obtain homogeneous derivation on $\mathbb{C}[S_\infty]$ different of $0$ (Proposition \ref{lemmas}), we will say that a derivation on $\mathbb{C}[S_\infty]$ is \emph{ quasi-homogeneous} if the corresponding derivation on $\mathbb{C}[S]$ is homogeneous. The interest in quasi-homogeneous derivation on $\mathbb{C}[S_\infty]$ is analogous to the interest on homogeneous derivation of $\mathbb{C}[S]$, that is, the possibility to obtain a decomposition of any derivation $\partial$ as a finite sum of homogeneous derivations, a proof of this can be seen in \cite[Lemma 1.10]{L10}, so since the linear correspondence between derivations on $\mathbb{C}[S]$ and $\mathbb{C}[S_\infty]$ also any derivation on $\mathbb{C}[S_\infty]$ has a decomposition as a finite sum of quasi-homogeneous derivations, thus by $\deg\partial$ of a quasi-homogeneous derivation we meant to the $e\in S\subset S_\infty$ such that $\deg \partial'=e$ where $\partial(\chi^a)=\partial'(\chi^a)+c_{\partial(a)}\chi^\infty$. This section we make a complete classification of topologically integrable quasi-homogeneous derivation on $\mathbb{C}[S_\infty]$ motivated by the next example:
\begin{example}\label{2-derivation}
With $\mathbb{C}[S_\infty]$ as Example \ref{example derivation} endowed with the topology induced by the linear system $\{\mathfrak{a}_i\}_{i\in \NN}$ where $\mathfrak{a}_i$ are the ideals $\la\{\chi^j-\chi^\infty\mid j>i\}\ra$. We can define the derivation 

\[
\begin{array}{rcl} 
 \partial_2\colon \mathbb{C}[S_\infty]& \rightarrow &\mathbb{C}[S_\infty] \\ 
 \chi^1&\mapsto &\chi^{2}-\chi^\infty\\
 \chi^\infty&\mapsto &0
 \end{array}
\]
 Effectively is a derivation since $\partial'=\chi^{2}\partial$ where $\partial$ is the derivation defined in Example \ref{example derivation}. We can see that is quasi-homogeneous with $\deg\partial'=2$ and topologically integrable since $\partial'^{(n)}(\chi ^a+\mathfrak{a}_i)\subset \mathfrak{a}_i$ for all $n\geq 1$.
 In polynomial terms, the derivation $\partial_2$ has associated the derivation $x^2\dfrac{d}{dx}\colon \CC[x]\rightarrow\CC[x]$. In general, as we will prove more general, $x^l\dfrac{d}{dx}\colon \CC[x]\rightarrow\CC[x]$ give us a topologically integrable derivation on $\CC[S_\infty]$ if and only if $l>1$. 
 \end{example}
 \begin{example}\label{0-derivation}
 In general, it is not true that homogeneous locally nilpotent derivations are topologically integrable. Again with $S=\NN$ and the same linear system of Example \ref{2-derivation} we define the locally nilpotent derivation
 \[
\begin{array}{rcl} 
 \partial\colon \mathbb{C}[S_\infty]& \rightarrow &\mathbb{C}[S_\infty] \\ 
 \chi^1&\mapsto &1-\chi^\infty\\
 \chi^\infty&\mapsto &0
 \end{array}
\]
 
 $\partial$ does not verify (P.2) of Lemma \ref{equivalence}, because $\partial^{(n)}(\chi^a)=\dfrac{a!}{(a-n)!}(\chi^{a-n}-\chi^\infty)$ for all $n\leq j$ thus for all $\mathfrak{a}_i$ and $\mathfrak{a}_j$ (we can assume $i<j$) with $j<a$ and $n=a-i$ we have $\partial^{(a-i)}(\chi^a-\chi^\infty)=\dfrac{a!}{i!}(\chi^{i}-\chi^\infty)$ therefore $\partial^{(n)}(\mathfrak{a}_j)$ is not a subset of $\mathfrak{a}_i$.
\end{example}

In \cite[Theorem 2.7]{L10} there is a complete classification of homogeneous locally nilpotent derivation on $\mathbb{C}[S]$ when $S$ is saturated. When $S$ is saturated, an element $e\in M$ is called a Demazure root if there exists $\rho\in \sigma(1)$ such that $\la e,\rho\ra=-1$ and $\la e,\rho '\ra \geq 0$ for all $\rho'\in \sigma(1)$ with $\rho'\neq \rho$. We are going to denote $\mathcal{R}(S)$ to the set of Demazure roots associated to $S$ and by $\mathcal{R}_\rho(S)$ we will refer to the set of Demazure roots associated to $S$ with distinguished ray $\rho$. For all $e\in \mathcal{R}_\rho(S)$ we can define the $M$-homogeneous locally nilpotent derivation $\partial_e\colon\mathbb{C}[S]\rightarrow \mathbb{C}[S]$ defined by $\partial_e(\chi^a)=\la a,\rho\ra\chi^{a+e}$, furthermore all $M$-homogeneous locally nilpotent derivation on $\mathbb{C}[S]$, different of the $0$ derivation, it is of the form $\lambda\partial_e$ where $\lambda \in \mathbb{C}^*$. When $S$ is not saturated and $S^{sat}$ is the saturation of $S$, a derivation $\lambda\partial_e$ on $\mathbb{C}[S^{sat}]$ with $e\in \mathcal{R}_\rho(S^{sat})$ defines a $M$-homogeneous locally nilpotent derivation on $\mathbb{C}[S]\subset \mathbb{C}[S^{sat}]$ if $\mathbb{C}[S]$ is stabilized by $\partial_e$. Also, as is proved in the next lemma, all $M$-homogeneous locally nilpotent derivation on $\mathbb{C}[S]$ is the restriction of a $M$-homogeneous locally nilpotent derivation on $\mathbb{C}[S^{sat}]$ thus $\deg \partial \in \mathcal{R}(S^{sat})$.

\begin{lemma}
If $\partial\neq 0 \in \operatorname{LND}(\mathbb{C}[S])$ is $M$-homogeneous of $\operatorname{deg}\partial=e$, then there exists a $M$-homogeneous $\partial'\in\operatorname{LND}(\mathbb{C}[S^{sat}])$ such that $\partial'|_{\mathbb{C}[s]}=\partial$. Furthermore $e\in \mathcal{R}(S)$.
\end{lemma}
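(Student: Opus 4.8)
The plan is to show that any nonzero $M$-homogeneous locally nilpotent derivation $\partial$ on $\mathbb{C}[S]$ with $\deg\partial=e$ extends to $\mathbb{C}[S^{sat}]$, and that $e$ must be a Demazure root of $S^{sat}$. First I would recall that $\mathbb{C}[S^{sat}]$ is the integral closure of $\mathbb{C}[S]$ in its fraction field $\mathrm{Frac}(\mathbb{C}[S])=\mathbb{C}[M]$, since $S^{sat}=M\cap\mathrm{cone}(A)$. A classical fact in the theory of locally nilpotent derivations (see \cite{F06}) is that any locally nilpotent derivation of an affine domain $B$ extends uniquely to a locally nilpotent derivation of its integral closure $\widetilde{B}$; the key point is that $\partial$ extends first to $\mathrm{Frac}(B)$ by the quotient rule, and then one checks that the extension preserves integrality of elements (an integral element $x$ satisfies a monic polynomial over $B$, and applying $\partial$ repeatedly together with local nilpotency shows $\partial(x)$ is again integral). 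Thus $\partial$ induces a derivation $\partial'$ on $\mathbb{C}[M]$, which restricts to a locally nilpotent derivation of $\mathbb{C}[S^{sat}]$. Since $\partial$ is $M$-homogeneous of degree $e$, so is its extension to $\mathbb{C}[M]$ — because the $M$-grading on $\mathbb{C}[M]$ extends that of $\mathbb{C}[S]$ and the extension is unique, it commutes with the torus action — hence $\partial'$ is $M$-homogeneous of degree $e$ as well.

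Next I would invoke the classification recalled just before the lemma: every nonzero $M$-homogeneous locally nilpotent derivation on the saturated semigroup algebra $\mathbb{C}[S^{sat}]$ is of the form $\lambda\partial_e$ with $\lambda\in\mathbb{C}^*$ and $e\in\mathcal{R}(S^{sat})$, where $\partial_e(\chi^a)=\langle a,\rho\rangle\chi^{a+e}$ for the distinguished ray $\rho$ with $\langle e,\rho\rangle=-1$. Applying this to $\partial'$ gives immediately that $\deg\partial'=e\in\mathcal{R}(S^{sat})$, and since $\deg\partial'=\deg\partial=e$, we conclude $e\in\mathcal{R}(S^{sat})$ as claimed. (One should take a small amount of care: the lemma as stated writes "$e\in\mathcal{R}(S)$", but in the unsaturated case $\mathcal{R}(S)$ is not defined intrinsically; the intended statement, consistent with the sentence preceding the lemma, is $e\in\mathcal{R}(S^{sat})$, and that is what the argument yields.)

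The main obstacle is the extension step: one must verify that a locally nilpotent derivation of $\mathbb{C}[S]$ really does restrict from $\mathbb{C}[M]$ to $\mathbb{C}[S^{sat}]$, i.e.\ that $\partial'(\mathbb{C}[S^{sat}])\subset\mathbb{C}[S^{sat}]$, not merely that $\partial'$ is defined on $\mathbb{C}[M]$. This follows from the integral-closure argument above: $\mathbb{C}[S^{sat}]$ consists precisely of the elements of $\mathbb{C}[M]$ integral over $\mathbb{C}[S]$, and local nilpotency of $\partial$ propagates integrality under $\partial'$. Concretely, for a monomial $\chi^a$ with $a\in S^{sat}$ there is a positive integer $k$ with $ka\in S$; writing $\partial'(\chi^a)=\frac{1}{k}\chi^{a(1-k)}\partial(\chi^{ka})$ and noting that $\partial(\chi^{ka})\in\mathbb{C}[S]$ is supported on $ka+(\text{elements of }S)$, a degree/support computation in $M$ (using $\langle e,\rho\rangle=-1$ to control the exponent along $\rho$, and $\langle e,\rho'\rangle\ge 0$ for the other rays) shows the resulting exponents lie in $\mathrm{cone}(A)\cap M=S^{sat}$. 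Everything else is bookkeeping with the $M$-grading, which I would not grind through in detail.
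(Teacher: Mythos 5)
Your proposal is correct and follows essentially the same route as the paper: extend $\partial$ to the integral closure $\mathbb{C}[S^{sat}]$ via Seidenberg's theorem (the paper cites \cite{S66} directly where you cite the classical fact from the LND literature), use Vasconcelos \cite{V69} for local nilpotency of the extension, check that homogeneity of degree $e$ is preserved via the relation $\partial'(\chi^{ka})=k\chi^{(k-1)a}\partial'(\chi^a)$ with $ka\in S$, and conclude $e\in\mathcal{R}(S^{sat})$ from \cite[Theorem 2.7]{L10}. Your parenthetical point that the conclusion should read $e\in\mathcal{R}(S^{sat})$ is also consistent with the paper, whose own proof ends with exactly that statement (the notation $\mathcal{R}(S)$ for non-saturated $S$ is only defined afterwards).
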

\begin{proof}
By theorem in \cite{S66} we can to extend $\partial$ to a derivation $\partial'$ on $\mathbb{C}[S^{sat}]$ additionally $\partial'$ is locally nilpotent derivation by \cite[Theorem 2.2]{V69}. Now, for all $a\in S^{sat}$ there exists a minimal $k$ positive integer such that $k a\in S$, then $\partial'(\chi^a)$ verifies $\partial'(\chi^a)=\partial(\chi^a)$ if $a\in S$, $\partial'(\chi^a)=0$ if $k a\in \operatorname{ker}\partial$ and for the other cases $\partial'(\chi^{k a})=k\chi^{(k-1) a}\partial'(\chi^a)$ but also $\partial(\chi^{k a})=c\chi^{k a+e}$ where $c\in \mathbb{C}^*$, if we see the elements $c\chi^{k a+e}$ and $k\chi^{(k-1) a}\partial'(\chi^a)$ in $\mathbb{C}[M]$ we can conclude that $\partial'(\chi^a)=c'\chi^{a+e}$ where $c'\in \mathbb{C}^*$ thus $\partial'$ is a homogeneous locally nilpotent derivation and by \cite[Theorem 2.7]{L10} $e\in \mathcal{R}(S^{sat})$.
\end{proof}

The previous lemma together \cite[Theorem 2.7]{L10} give us a complete description of homogeneous locally nilpotent derivations on $\mathbb{C}[S]$ in term of a subset of $\mathcal{R}(S^{sat})$. 
\begin{definition}
Let S be an affine semigroup, a Demazure root on $S$ is a Demazure root associated to $S^{sat}$ such that $\mathbb{C}[S]$ is stabilized by $\partial_e$. By $\mathcal{R}(S)$ we mean to the Demazure roots of $S$ and by $\mathcal{R}_\rho(S)$ we mean to the Demazure roots with $\rho$ the distinguished ray.
\end{definition}
A natural question, motivated by Example \ref{0-derivation} and the possibility to obtain a description of homogeneous locally nilpotent derivation on $\CC[S]$, is: \emph{When a homogeneous locally nilpotent derivation on $\mathbb{C}[S]$ gives us a quasi-homogeneous topologically integrable derivation on $\CC[S_\infty]$?} To obtain an answer to the question allow us fix some notations. By Example \ref{2-derivation}, we will give an explicit description of the linear system $\{\mathfrak{a}_i\}_{i\in \NN}$ on $\CC[S_\infty]$ defined by $\ker\psi_i$. If $\mathfrak{a}_i=\la \{\chi^a-\chi^\infty\mid \mathfrak{s}(a)>i\}\ra$, or equivalently $\mathfrak{a}_i=\la \{\chi^a-\chi^\infty\mid a\notin H_i\}\ra$ we have that $\mathfrak{a}_i=\ker \psi_i$. Obviously $\mathfrak{a}_i\subset \ker \psi_i$ on the other hand, the contention $\ker\psi_i\subset \mathfrak{a}_i$ is not so immediate but is a combinatorial proof proved in the next lemma: 

\begin{lemma}
With the previous definitions, $\ker\psi_i\subset \mathfrak{a}_i$.
\end{lemma}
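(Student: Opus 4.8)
The plan is to make the homomorphism $\psi_i$ completely explicit on the monomial basis of $\CC[S_\infty]$ and then recover $\ker\psi_i$ by a short linear-algebra argument. Recall that $\psi_i$ is the $\CC$-algebra homomorphism induced by the composite semigroup homomorphism $S_\infty\to\cdots\to S_{i+1}\xrightarrow{\phi_i}S_i$. First I would check that this composite sends $a\in S$ to $a$ when $\mathfrak{s}(a)\le i$ and to $\infty$ when $\mathfrak{s}(a)>i$: since each $\phi_j$ fixes the elements of $H_j$ and collapses everything else to $\infty$, and since $a\in H_j$ exactly when $\mathfrak{s}(a)\le j$, the element $a$ survives all the way down to $S_i$ precisely when $\mathfrak{s}(a)\le i$. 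Consequently $\psi_i(\chi^a)=\chi^a$ for $a\in H_i$, $\psi_i(\chi^a)=\chi^\infty$ for $a\in S\setminus H_i$, and $\psi_i(\chi^\infty)=\chi^\infty$. I would also record that $\{\chi^a\mid a\in H_i\}\cup\{\chi^\infty\}$ is a $\CC$-basis of $\CC[S_i]$.

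Given this, the argument runs as follows. Take $f\in\ker\psi_i$ and write $f=\sum_{a\in T}c_a\chi^a+c\,\chi^\infty$ with $T\subset S$ finite, $c_a\in\CC\setminus\{0\}$ and $c\in\CC$; split $T=T'\sqcup T''$ with $T'=T\cap H_i$ and $T''=T\setminus H_i$. Then
\[
0=\psi_i(f)=\sum_{a\in T'}c_a\chi^a+\Bigl(c+\sum_{a\in T''}c_a\Bigr)\chi^\infty ,
\]
and comparing coefficients in the basis above forces $T'=\emptyset$ and $c=-\sum_{a\in T''}c_a$. Hence $f=\sum_{a\in T''}c_a(\chi^a-\chi^\infty)$, and since every $a\in T''$ satisfies $a\notin H_i$ (equivalently $\mathfrak{s}(a)>i$), each $\chi^a-\chi^\infty$ is one of the generators of $\mathfrak{a}_i$, so $f\in\mathfrak{a}_i$. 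This gives $\ker\psi_i\subset\mathfrak{a}_i$, and together with the evident reverse inclusion (each generator $\chi^a-\chi^\infty$ with $a\notin H_i$ maps to $\chi^\infty-\chi^\infty=0$) it yields $\mathfrak{a}_i=\ker\psi_i$.

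The only step requiring genuine care is the first one, pinning down $\psi_i$ on monomials: one must know that $\mathfrak{s}$ is well defined, which holds because $S$ is a pointed affine semigroup, so by the Hilbert-basis analogue of Proposition~\ref{finite} each $a$ has only finitely many representations over $\mathscr{H}$; and one must see that the collapsing rule is consistent along the whole tower $S_\infty\to\cdots\to S_{i+1}\to S_i$, which uses that $\mathfrak{s}$ is monotone and superadditive, $\mathfrak{s}(a+b)\ge\mathfrak{s}(a)+\mathfrak{s}(b)$ (add maximizing representations of $a$ and $b$). Once $\psi_i$ is described on monomials, no combinatorial obstacle remains — the inclusion is pure linear algebra.
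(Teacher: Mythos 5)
Your proof is correct and follows essentially the same route as the paper's: write $f\in\ker\psi_i$ in the monomial basis, observe that the exponents occurring must lie outside $H_i$ and that the coefficients (including that of $\chi^\infty$) sum to zero, and regroup $f$ as a combination of the generators $\chi^a-\chi^\infty$ of $\mathfrak{a}_i$. Your version is slightly tidier in that it treats the cases $c=0$ and $c\neq 0$ uniformly and makes the action of $\psi_i$ on monomials (and the superadditivity of $\mathfrak{s}$ behind it) explicit, which the paper leaves implicit.
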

\begin{proof}

If $f\in\ker\psi_i\subset \CC[S_\infty]$, with $f=\sum_{j=1}^m \lambda_{b_j}\chi^{b_j}+\lambda_\infty\chi^\infty$ then $b_j\notin S_i$ for all integer $j$ with $1\leq j\leq m$. If $\lambda_\infty\neq 0$ then 
$\sum_{j=1}^m \lambda_{b_j}+\lambda_\infty=0$ thus $f=\sum_{j=1}^m \lambda_{b_j}\chi^{b_j}+\lambda_\infty\chi^\infty=\sum_{j=1}^m \lambda_{b_j}\chi^{b_j}-(\sum_{j=1}^m \lambda_{b_j})\chi^\infty=\sum_{j=1}^m\lambda_{b_j}(\chi^{b_j}-\chi^\infty)$ therefore $f\in \mathfrak{a}_i$ since $\chi^{b_j}-\chi^\infty$ is an element in $\mathfrak{a}_i$ for all $j$. If $\lambda_\infty=0$ then we can rewrite $f=f_1-f_2$ where $f_1=\sum_{j=1}^{m_1} \lambda_{c_j}\chi^{c_j}$ and $f_2=\sum_{j=1}^{m_2} \lambda_{d_j}\chi^{d_j}$ where $\sum_{j=1}^{m_1} \lambda_{c_j}=\sum_{j=1}^{m_2} \lambda_{d_j}$ then $f=\sum_{j=1}^{m_1} \lambda_{c_j}\chi^{c_j}-\sum_{j=1}^{m_2} \lambda_{d_j}\chi^{d_j}+(\sum_{j=1}^{m_2} \lambda_{d_j}-\sum_{j=1}^{m_1} \lambda_{c_j})\chi^\infty=\sum_{j=1}^{m_1} \lambda_{c_j}(\chi^{c_j}-\chi^{\infty})-\sum_{j=1}^{m_2} \lambda_{d_j}(\chi^{d_j}-\chi^\infty)$ thus analogous with the case $\lambda_\infty\neq 0$ the element $f$ is in $\mathfrak{a}_i$.
\end{proof}

Now we can see if $\partial$ is a quasi-homogeneous lnd just we need to verify (P.2) of Lemma \ref{equivalence} because $\partial(\lambda)=0$ for all $\lambda \in \CC$ and (P.1) is verified since there exists $n$ such that $\partial^{(n)}(f)=0$.

\begin{proposition}\label{irreducible}
Let $\partial$ be a quasi-homogeneous locally nilpotent derivation of degree $e\in \mathcal{R}\rho(S)$ on $\mathbb{C}[S_\infty]$.
$\partial$ is topologically integrable if and only if $-e\notin S$.
\end{proposition}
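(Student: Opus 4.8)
The plan is to apply Lemma~\ref{equivalence}. Since $\partial$ is a quasi-homogeneous lnd, property (P.1) holds automatically, so the question is exactly whether (P.2) holds: for every $i$ there is $j$ with $\partial^{(n)}(\mathfrak a_{j})\subseteq\mathfrak a_{i}$ for all $n$. I would first record the closed form for the iterates. Writing the associated homogeneous lnd on $\CC[S]$ as $\partial'=\lambda\partial_{e}$ with $\lambda\in\CC^{*}$ and $e\in\mathcal R_{\rho}(S)$ (by the classification of homogeneous lnds recalled above), one gets, for $n\ge1$,
\[
\partial^{(n)}(\chi^{a})=\lambda^{n}\Bigl(\textstyle\prod_{l=0}^{n-1}(\la a,\rho\ra-l)\Bigr)\bigl(\chi^{a+ne}-\chi^{\infty}\bigr),
\]
where we used $\la e,\rho\ra=-1$ and $(1-\chi^{\infty})^{k}=1-\chi^{\infty}$; the coefficient is nonzero exactly for $0\le n\le\la a,\rho\ra$, and for such $n$ iterating the defining property of the Demazure root (if $b\in S$ and $\la b,\rho\ra\ge1$ then $b+e\in S$) gives $a+ne\in S$. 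So everything hinges on whether the exponents $a+ne$ can return to a fixed finite set $H_{i}$ from arbitrarily deep in the filtration.

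For ``$-e\in S\ \Rightarrow\ \partial$ is not topologically integrable'' I would exhibit an explicit failure of (P.2) with $i=0$. Given an arbitrary $j$, put $b=-(j{+}1)e\in S$ and $n=j+1$; then $\la b,\rho\ra=j+1=n$, the coefficient is $(j{+}1)!\neq0$, and $b+ne=0$, so $\partial^{(n)}(\chi^{b}-\chi^{\infty})=\lambda^{\,j+1}(j{+}1)!\,(1-\chi^{\infty})$. Now $\mathfrak s\bigl(-(j{+}1)e\bigr)\ge(j{+}1)\,\mathfrak s(-e)\ge j+1>j$, so $\chi^{b}-\chi^{\infty}\in\mathfrak a_{j}$, whereas $1-\chi^{\infty}\notin\mathfrak a_{0}=\ker\psi_{0}$ because $\psi_{0}(1-\chi^{\infty})=\chi^{0}-\chi^{\infty}\neq0$ in $\CC[S_{0}]$. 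Hence $\partial^{(n)}(\mathfrak a_{j})\not\subseteq\mathfrak a_{0}$ for every $j$, and (P.2) fails.

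For the converse, assume $-e\notin S$; the core of the argument is the claim that for every $d\in S$ the set $T_{d}=\{n\ge0:\ d-ne\in S\}$ is finite. Using the Demazure-root property, $n\in T_{d}$ and $n\ge1$ forces $n-1\in T_{d}$, so $T_{d}$ is downward closed; if it were infinite then $d-ne\in S\subseteq\sigma^{\vee}$ for all $n$, and since $\sigma^{\vee}$ is a closed cone this forces $-e\in\sigma^{\vee}\cap M=S^{sat}$, i.e.\ $-ke\in S$ for some $k\ge1$, and then iterating ``$b\in S,\ \la b,\rho\ra\ge1\Rightarrow b+e\in S$'' along $-ke,-(k{-}1)e,\dots,-e$ yields $-e\in S$, a contradiction. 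Granting this, fix $i$ and note that $B_{i}=\{b\in S:\ b+ne\in H_{i}\text{ for some }0\le n\le\la b,\rho\ra\}$ equals $\{d-ne:\ d\in H_{i},\ n\in T_{d}\}$, a finite union of finite sets; also $H_{i}\subseteq B_{i}$. Put $j_{i}=\max_{b\in B_{i}}\mathfrak s(b)$, so that $\mathfrak s(b)>j_{i}$ forces both $b\notin B_{i}$ and $\mathfrak s(b)>i$. For such $b$ and any $n$: if $n>\la b,\rho\ra$ then $\partial^{(n)}(\chi^{b}-\chi^{\infty})=0$; otherwise $b+ne\in S$ with $\mathfrak s(b+ne)>i$, so $\chi^{b+ne}-\chi^{\infty}\in\mathfrak a_{i}$; either way $\partial^{(n)}(\chi^{b}-\chi^{\infty})\in\mathfrak a_{i}$. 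A general element of $\mathfrak a_{j_{i}}$ is $\sum_{k}g_{k}(\chi^{b_{k}}-\chi^{\infty})$ with $\mathfrak s(b_{k})>j_{i}$, and expanding via $\partial^{(n)}(gh)=\sum_{l}\binom{n}{l}\partial^{(l)}(g)\,\partial^{(n-l)}(h)$ and using that $\mathfrak a_{i}$ is an ideal shows $\partial^{(n)}(\mathfrak a_{j_{i}})\subseteq\mathfrak a_{i}$ for all $n$, which is (P.2). So $\partial$ is topologically integrable.

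The main obstacle is the finiteness of $T_{d}$: I expect the delicate point to be ruling out that a ``backward orbit'' $d,d-e,d-2e,\dots$ stays in $S$ indefinitely, which is where the polyhedral geometry of $\sigma^{\vee}$ and the rigidity of Demazure roots (in particular the equivalence $-e\in S\iff -e\in S^{sat}$) are essential. The remaining steps are routine bookkeeping with the closed form for $\partial^{(n)}$ and with the defining generators of the ideals $\mathfrak a_{i}$.
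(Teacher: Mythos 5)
Your argument is correct, and while the ``only if'' direction is essentially the paper's construction, your ``if'' direction takes a genuinely different route. For non-integrability when $-e\in S$ you push $\chi^{(j+1)(-e)}-\chi^{\infty}$ forward to a fixed target; the paper aims at $\chi^{i(-e)}-\chi^{\infty}$ for an arbitrary $i$ (which tacitly requires $i(-e)\in H_{i}$, i.e.\ $\mathfrak{s}(i(-e))\le i$), whereas your specialization to $i=0$ lands at $1-\chi^{\infty}\notin\ker\psi_{0}$ and needs only $\mathfrak{s}(0)=0$, which is immediate from pointedness --- a cleaner endpoint. For the converse the paper proves the monotonicity statement $\mathfrak{s}(a+e)\ge\mathfrak{s}(a)$ for $a\notin\rho^{*}\cap S$ (reducing to Hilbert basis elements, where it amounts to $a+e\ne 0$, i.e.\ $-e\notin S$, and then using superadditivity of $\mathfrak{s}$), which yields the stronger conclusion that $j=i$ works in (P.2). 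You instead prove that each backward orbit $T_{d}=\{n:d-ne\in S\}$ is finite, via downward closedness from the Demazure-root property plus closedness of the cone $\sigma^{\vee}$ to rule out $-e\in S^{sat}$, and then take a possibly larger index $j_{i}=\max_{b\in B_{i}}\mathfrak{s}(b)$. This costs you the sharp $j=i$ but buys an argument that does not depend on any monotonicity of the weight function $\mathfrak{s}$, only on finiteness of the preimage of the finite set $H_{i}$ under the orbit map; both the finiteness of $T_{d}$ and the identification $B_{i}=\{d-ne: d\in H_{i},\ n\in T_{d}\}$ check out (the constraint $n\le\langle b,\rho\rangle$ is automatic since $\langle d-ne,\rho\rangle\ge n$), and the final Leibniz expansion together with $\mathfrak{a}_{i}$ being an ideal closes the argument as in the paper.
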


\begin{proof}
 If $-e\in S$ we will prove that for all $i$ and $j$ (we can assume $j\geq i$) there exists $n$ such that $\partial^{(n)}(\mathfrak{a}_j)$ is not a subset of $\mathfrak{a}_i$. First to note if $e$ is a Demazure root then $-e\in \mathscr{H}$ because if $-e\notin \mathscr{H}$ then $-e=a+b$ where $a,b\in S$ so $e=-a-b$ but $\la e, \rho\ra =-1$ thus 
 $-\la a,\rho\ra -\la b,\rho\ra=-1$ but it is possible just if $a=0$ or $b=0$. Now we fix the element {$\chi^{(j+1)(-e)}-\chi^\infty\in \mathfrak{a}_j$} and $n=j-i+1$ the $n$-th iteration $\partial^{(n)}(\chi^{(j+1)(-e)}-\chi^\infty)=\lambda_n(\chi^{(j+1)(-e)+n e}-\chi^\infty)=\lambda_n(\chi^{(j+1)(-e)+(i-(j+1))(-e)}-\chi^\infty)=\lambda_n(\chi^{i(-e)}-\chi^\infty)\notin \mathfrak{a}_i$ since $-e\in \mathscr{H}$. 

In the other direction, if $-e\notin S$ we will prove that $\partial^{(n)}(\mathfrak{a}_i)\subset \mathfrak{a}_i$ for all $i$ and for all $n$ positive integers. 
 First we can see that $\mathfrak{s}(a+e)\geq \mathfrak{s}(a)$ for all $a\in \mathscr{H}$ such that $a\notin \rho^* \cap S$. Also if $a\in S$ with $a\notin \rho^*\cap S$, there exists $a'\in \mathscr{H}$ such that $a'\notin\rho^*\cap S$ and $a=a'+b$ for some $b\in S$ verifying $\mathfrak{s}(b)\geq \mathfrak{s}(a)-1$ thus $\mathfrak{s}(a+e)=\mathfrak{s}(b+a'+e)\geq \mathfrak{s}(b)+\mathfrak{s}(a'+e)\geq \mathfrak{s}(a)-1+ \mathfrak{s}(a'+e) \geq \mathfrak{s}(a)$. Now if $f=\chi^{a}-\chi^\infty$ is an element in $\mathfrak{a}_i$, there exists a minimal $l$ such that $\partial^{(n)}(f)=0\in \mathfrak{a}_i$ for all $n>l$ when $n\leq l$ we have that $\partial^{(n)}(\chi^a-\chi^\infty)=\lambda_n(\chi^{a+n e}-\chi^\infty)$ where $\lambda_n\neq 0\in \CC$ depending of $n$ and $a$. Since $a+(n-1) e\notin \rho^*\cap S$ for all $n\leq l$ previously we proved that $\mathfrak{s}(a+n e)\geq \mathfrak{s}(a+(n-1) e)$ thus we conclude that $\partial^{(n)}(\chi^a-\chi^\infty)\in \mathfrak{a}_i$ for all $n$. Finally we can see that if $f=\chi^a-\chi^\infty\in \mathfrak{a}_i$ and $g\in \mathbb{C}[S]$ the $n$-th iteration is $\partial^{(n)}(gf)=\sum_{i=0}^n\binom{n}{i}\partial^{(i)}(f)\partial^{(n-1)}(g)\in \mathfrak{a}_i$ and together of the linearity of $\partial$, $\partial^{(n)}(\mathfrak{a}_i)\subset \mathfrak{a}_i$ for all $i$ and for all $n$ positive integers. 
\end{proof}

\begin{definition}
A Demazure root of $S$ is said to be reducible, if there exist $a\neq 0\in S$ and $e'\neq e\in \mathcal{R}(S)$ such that $e=e'+a$. If $e$ is not reducible will be called an irreducible root.
\end{definition}

\begin{lemma}
Let $\partial$ be a quasi-homogeneous locally nilpotent derivation of degree $e\in \mathcal{R}\rho(S)$. If $e$ is reducible, then $\partial$ is topologically integrable.
\end{lemma}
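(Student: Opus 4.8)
The plan is to reduce everything to Proposition~\ref{irreducible}. By that proposition, a quasi-homogeneous locally nilpotent derivation $\partial$ of degree $e\in\mathcal{R}_\rho(S)$ on $\mathbb{C}[S_\infty]$ is topologically integrable if and only if $-e\notin S$. So the whole statement reduces to the purely combinatorial assertion: \emph{if $e\in\mathcal{R}_\rho(S)$ is reducible, then $-e\notin S$}. I would prove this by contradiction, assuming $-e\in S$ and using the decomposition $e=e'+a$ with $0\neq a\in S$ and $e\neq e'\in\mathcal{R}(S)$ supplied by reducibility.

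The core step is to isolate, for an \emph{arbitrary} Demazure root, the observation already used tersely inside the proof of Proposition~\ref{irreducible}: if $f$ is a Demazure root of $S$ with distinguished ray $\rho_f$ and $-f\in S$, then $-f\in\mathscr{H}$. To prove it, suppose $-f=u+v$ with $u,v\in S$. Since $S\subset S^{sat}=M\cap\cone(A)$, one has $\la u,\rho'\ra\geq 0$ and $\la v,\rho'\ra\geq 0$ for every $\rho'\in\sigma(1)$. For $\rho'\neq\rho_f$ the relation $\la u,\rho'\ra+\la v,\rho'\ra=\la -f,\rho'\ra=-\la f,\rho'\ra\leq 0$ forces $\la u,\rho'\ra=\la v,\rho'\ra=0$, while $\la u,\rho_f\ra+\la v,\rho_f\ra=1$ forces, say, $\la u,\rho_f\ra=0$. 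Hence $u$ pairs to $0$ against every ray of $\sigma$; since $S$ is pointed, $\sigma\subset N_\RR$ is strongly convex and full dimensional (as noted in the proof of Proposition~\ref{finite}), hence generated by its rays, so $u$ annihilates all of $\sigma$ and therefore $u=0$. Thus $-f$ admits no expression as a sum of two nonzero elements, i.e. $-f\in\mathscr{H}$.

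With this claim available I would conclude as follows. Assume $-e\in S$. Then $-e'=a+(-e)$ lies in $S$, because $-e'=a-e$ in $M$ and $S$ is closed under addition. Applying the claim to the root $e'$ (with its own distinguished ray, possibly different from $\rho$) gives $-e'\in\mathscr{H}$, i.e. $-e'$ is irreducible. But $-e'=a+(-e)$ with $a\neq 0$, so irreducibility of $-e'$ forces $a=-e'$ or $-e=-e'$. If $-e=-e'$ then $e=e'$, contradicting $e\neq e'$; if $a=-e'$ then, substituting back, $-e=0$, i.e. $e=0$, contradicting $\la e,\rho\ra=-1$. This contradiction shows $-e\notin S$, and Proposition~\ref{irreducible} then yields that $\partial$ is topologically integrable.

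I do not expect a genuine obstacle: the reduction to ``$-e\notin S$'' is immediate from Proposition~\ref{irreducible}, and the passage from the decomposition $e=e'+a$ to $-e'\in S$ and then to a violation of the irreducibility of $-e'$ is short. The one point requiring a little care is proving the claim ``$-f\in\mathscr{H}$'' for a Demazure root whose distinguished ray need not be $\rho$ — which is why I would restate it for a general root rather than quoting verbatim the one-line version inside the proof of Proposition~\ref{irreducible}; the only external input it uses, namely that a full-dimensional strongly convex polyhedral cone is generated by its rays (so an element of its dual cone vanishing on all rays must be zero), is standard convex geometry.
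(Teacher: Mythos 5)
Your proof is correct, and although it shares with the paper the reduction to the combinatorial claim ``$e$ reducible $\Rightarrow -e\notin S$'' followed by an appeal to Proposition~\ref{irreducible}, the way you rule out $-e\in S$ is genuinely different. The paper writes $e=e'+a$ and, assuming $-e\in S$, adds the root $e'$ to $-e$ to get $e'+(-e)=-a\in S$, contradicting pointedness via $a+(-a)=0$; that step silently needs $-e$ to pair positively against the \emph{distinguished ray of $e'$} (which need not be $\rho$), a positivity the paper checks only against $\rho$. You instead go around the other side of the triangle: from $a,-e\in S$ you get $-e'=a+(-e)\in S$ by mere closure under addition, and then contradict your general claim that the negative of any Demazure root lying in $S$ must belong to $\mathscr{H}$. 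That claim is a strengthened, correctly quantified version of the one-line assertion ``$-e\in\mathscr{H}$'' used inside the proof of Proposition~\ref{irreducible}: there the decomposition $-f=u+v$ is tested only against the distinguished ray, which by itself yields $\langle u,\rho_f\rangle=0$ rather than $u=0$, whereas you test against all rays of $\sigma$ and invoke strong convexity and full-dimensionality (as recorded in the proof of Proposition~\ref{finite}) to force $u=0$. The trade-off is that your argument is a little longer but self-contained and closes the small gaps in both the lemma and the auxiliary claim, while the paper's route, once the missing positivity is supplied, is shorter because it only needs pointedness of $S$ rather than irreducibility of $-e'$.
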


\begin{proof}
if $e$ is reducible, $e=e'+a$ for some $a\in S$ and $e'\neq e\in \mathcal{R}(S)$. If $-e=-(e'+a)\in S$ we can see that $e'+a\notin \rho^*\cap S$ since $\la e'+a,\rho \ra=1$, then $e'+[-(e'+a)]=-a\in S$ a contradiction because $S$ is pointed. 
\end{proof}

As we saw in Example \ref{2-derivation} also there exists quasi-homogeneous topologically integrable derivation but they are not lnd. The follow theorem finish the classification of quasi-homogeneous topologically integrable derivation. 
 
\begin{theorem}\label{clasification}
Let $\partial$ be a not lnd quasi-homogeneous derivation on $\CC[S_\infty]$ of degree $e\in M$. $\partial$ is topologically integrable if and only if $e\in S\bs\{0\}$.
\end{theorem}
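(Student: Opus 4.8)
The plan is to test topological integrability through conditions (P.1) and (P.2) of Lemma~\ref{equivalence}. Write $\partial'\in\Der(\mathbb{C}[S])$ for the homogeneous derivation of degree $e$ corresponding to $\partial$; homogeneity gives $\partial'(\chi^a)=c_a\chi^{a+e}$ with $c_a\in\mathbb{C}$ (and $c_a=0$ unless $a+e\in S$), and the Leibniz rule makes $a\mapsto c_a$ additive, hence $c_a=\langle a,v\rangle$ for a nonzero $v\in\homo(M,\mathbb{C})$. As established in the proof of Proposition~\ref{LND}, $\partial^{(n)}(\chi^a)=(1-\chi^\infty)\partial'^{(n)}(\chi^a)$ for $a\in S$ and $\partial^{(n)}(\chi^\infty)=0$, so every $\partial^{(n)}(\chi^a)$ is either $0$ or a nonzero scalar multiple of $\chi^{a+ne}-\chi^\infty$; recall also that $\chi^c-\chi^\infty\in\mathfrak{a}_i$ exactly when $\mathfrak{s}(c)>i$. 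Two preliminary remarks: since $\partial$ kills constants it suffices to verify (P.1) and (P.2); and since $\partial$ is not lnd there is $a_0\in S$ with $\langle a_0+ke,v\rangle\neq0$ and $a_0+ne\in S$ for all $k,n\ge0$, so the half-line $a_0+\RR_{\ge0}e$ lies in the strongly convex cone $\cone(\mathscr{H})$, forcing $e\in\cone(\mathscr{H})\cap M=S^{sat}$.

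For the implication ``$e\in S\setminus\{0\}\Rightarrow\partial$ topologically integrable'' the key estimate is $\mathfrak{s}(a+e)\ge\mathfrak{s}(a)+1$ for every $a\in S$, proved by concatenating a maximal Hilbert-basis representation of $a$ with any representation of $e$ (nonempty since $e\neq0$); iterating yields $\mathfrak{s}(a+ne)\ge\mathfrak{s}(a)+n$. Condition (P.1) then follows at once: $\partial^{(n)}(\chi^a)\in\mathbb{C}\,(\chi^{a+ne}-\chi^\infty)\subset\mathfrak{a}_i$ as soon as $n>i$, and the general case is immediate by linearity. For (P.2) I would take $j=i$: on a generator $\chi^b-\chi^\infty$ of $\mathfrak{a}_i$ (so $\mathfrak{s}(b)>i$) each iterate $\partial^{(m)}(\chi^b-\chi^\infty)$ is a scalar multiple of $\chi^{b+me}-\chi^\infty$ with $\mathfrak{s}(b+me)\ge\mathfrak{s}(b)>i$, hence lies in $\mathfrak{a}_i$, and the iterated Leibniz formula $\partial^{(n)}(gh)=\sum_{k=0}^n\binom nk\partial^{(k)}(g)\partial^{(n-k)}(h)$ together with the fact that $\mathfrak{a}_i$ is an ideal propagates this to all of $\mathfrak{a}_i$. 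Thus $\partial^{(n)}(\mathfrak{a}_i)\subset\mathfrak{a}_i$ and Lemma~\ref{equivalence} applies.

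For the converse, assume $\partial$ topologically integrable. If $e=0$, then $\partial^{(n)}(\chi^{a_0})=\langle a_0,v\rangle^n(\chi^{a_0}-\chi^\infty)$ with $\langle a_0,v\rangle\neq0$; taking $i=\mathfrak{s}(a_0)$ we get $\chi^{a_0}-\chi^\infty\notin\mathfrak{a}_i$, hence $\partial^{(n)}(\chi^{a_0})\notin\mathfrak{a}_i$ for all $n$, contradicting (P.1); thus $e\neq0$. It remains to rule out $e\notin S$, which by the preliminary remark means $e\in S^{sat}\setminus S$ (this case is already vacuous when $S$ is saturated, since not being lnd forced $e\in S^{sat}$, so one may assume $S$ is not saturated). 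The plan here is to contradict (P.2): fix a small $i$ and, for each $j$, exhibit $b\in S$ with $\mathfrak{s}(b)>j$ and some $n\ge1$ with $\prod_{k=0}^{n-1}\langle b+ke,v\rangle\neq0$ but $\mathfrak{s}(b+ne)\le i$, so that $\chi^b-\chi^\infty\in\mathfrak{a}_j$ while $\partial^{(n)}(\chi^b-\chi^\infty)$ is a nonzero multiple of $\chi^{b+ne}-\chi^\infty\notin\mathfrak{a}_i$. This is the crux and the main obstacle: it hinges on whether, with $e$ only a rational (non-integral) combination of Hilbert basis elements, adding $e$ can \emph{decrease} the length function $\mathfrak{s}$ -- exactly what the estimate of the second paragraph forbids when $e\in S$. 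To produce such a $b$ I would work with the least $m\ge2$ for which $me\in S$ and with the extension of $\partial'$ to the homogeneous derivation $\langle\,\cdot\,,v\rangle\chi^{a+e}$ of $\mathbb{C}[S^{sat}]$ (which controls the vanishing of the coefficients), and look for orbits $b,b+e,\dots,b+ne$ that start ``expensive'' in $S$ but become cheaply representable upon completing a block of $m$ steps. Once (P.2) is seen to fail, Lemma~\ref{equivalence} yields that $\partial$ is not topologically integrable, and together with Proposition~\ref{irreducible} for the lnd case this completes the classification.
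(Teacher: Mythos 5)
Your forward direction ($e\in S\setminus\{0\}$ implies topologically integrable) and your treatment of $e=0$ in the converse are correct and essentially the paper's own argument: the superadditivity estimate $\mathfrak{s}(a+ne)\geq \mathfrak{s}(a)+n$ yields (P.1) and (P.2) with $j=i$ (the paper phrases this as $ne\notin H_i$ for $n>i$ and $a+ne\notin H_i$ whenever $a\notin H_i$), and for $e=0$ the iterates $\partial^{(n)}(\chi^{a_0})=\la a_0,v\ra^{n}(\chi^{a_0}-\chi^{\infty})$ never enter $\mathfrak{a}_{\mathfrak{s}(a_0)}$, violating (P.1). Your preliminary observation that $\partial$ not being lnd forces $e\in S^{sat}$ is also correct and sharper than anything made explicit in the paper.

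The genuine gap is exactly the case you flag as the crux and leave open: $e\in S^{sat}\setminus S$, $e\neq 0$, which can occur only when $S$ is not saturated. Your plan there is to violate (P.2) by exhibiting orbits $b, b+e,\dots,b+ne$ along which $\mathfrak{s}$ drops below a fixed $i$; but such orbits need not exist, so this step fails. Concretely, take $S=\la 2,3\ra\subset\NN$, so $\mathscr{H}=\{2,3\}$ and $\mathfrak{s}(a)=\lfloor a/2\rfloor$, and let $\partial$ correspond to the homogeneous derivation $\partial'(\chi^{a})=a\,\chi^{a+1}$ of $\CC[S]$ (the restriction of $x^{2}\tfrac{d}{dx}$ to $\CC[x^{2},x^{3}]$), which is not lnd and has degree $e=1\notin S$. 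Since $\mathfrak{s}(a+n)\geq\mathfrak{s}(a)$ for all $n\geq 0$ and $\mathfrak{s}(a+n)\to\infty$, both (P.1) and (P.2) hold with $j=i$, so $\partial$ \emph{is} topologically integrable although $e\notin S$. So the ``only if'' direction of the statement fails for non-saturated $S$, and no completion of your strategy can rescue it. You should be aware that the paper's own proof has the same hole: its converse treats only $e=0$, which exhausts all cases precisely when $S=S^{sat}$. The theorem needs either the hypothesis that $S$ is saturated or a conclusion formulated in terms of $S^{sat}$ rather than $S$.
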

 
\begin{proof}
In general $\partial(\lambda)=0$ for all $\lambda$ in $\CC$ since $\partial(\lambda)=\partial'(\lambda)$ where $\partial'$ is the respective derivation on $\CC[S]$ associated to $\partial$.
Now, we need to verify (P.1) and (P.2) of Lemma \ref{equivalence} are verified if and only if $e\in S\bs\{0\}$. First we assume $e\in S\bs\{0\}$,
to prove (P.1), since $\CC$-linearity of $\partial$, just we need to verify with $\chi^a$ where $a\in S_\infty$. When $a=\infty$, $\partial(\chi^a)=0$ so $\partial^{(n)}(\chi^a)\in \mathfrak{a}_i$ for all $n$ positive integer and $i$ no negative integer. It is the same with $a=0$. If $a\neq \infty$ and $a\neq 0$, the element $\partial^{(n)}(\chi^a)=(1-\chi^\infty)\lambda_n\chi^{a+n e}=(\chi^{n e}-\chi^\infty)\lambda_n\chi^a$ for some $\lambda_n\in \CC$ depending of $n$ a positive integer. We can see that $n e\notin H_i$ for $i<n$ thus for all $i$ with $n_0>i$ the elements $\partial^{(n)}(\chi^a)\in \mathfrak{a}_i$ for all $n\geq n_0$. To verify (P.2), with $a \notin H_i$ the element $\partial^{(n)}(\chi^a-\chi^\infty )=\partial^{(n)}(\chi^a)=(1-\chi^\infty)\lambda_n\chi^{a+n e}=(\chi^{a+n e}-\chi^\infty)\lambda_n$ since $S$ is pointed, $a+n e\notin H_i$ for all $n$. Now if $f\in \mathfrak{a}_i$ is a general element, that is, $f=\sum_{k=1}^rf_k(\chi^{b_k}-\chi^\infty)$ with $b_k\notin H_i$. Again by the linearity of $\partial$ just it is necessary to assume $f=g(\chi^a-\chi^\infty)$. $\partial^{(n)}(g(\chi^a-\chi^\infty))=\sum_{k=0}^n\binom{n}{k}\partial^{(n-k)}(g)\partial^{(k)}(\chi^a-\chi^\infty)$ thus $\partial^{(n)}(\mathfrak{a}_i)\subset\mathfrak{a}_i$ for all $n$ since $\partial^{(k)}(\chi^a-\chi^\infty)$ are elements in $\mathfrak{a}_i$. If $e=0$, by the relation between derivation proved in Proposition \ref{LND} $\partial$ is not locally nilpotent since the derivation associated on $\CC[S]$ it is not locally nilpotent (see \cite[Theorem 2.7]{L10}) so there exists $a\in S_\infty$ such that $\partial^{(n)}(\chi^a)\neq 0$ also $a\in H_i$ for some $i$ 
thus $\partial^{(n)}(\chi^a)=(\chi^a-\chi^\infty)\lambda_n\notin \mathfrak{a}_i$ for all $n$ therefore $\partial$ it does not verify (P.1) and it is not topologically integrable.

\end{proof}

\end{document}